\documentclass[12pt]{amsart}


\usepackage[latin9]{inputenc} 
\usepackage[mathscr]{eucal}
\usepackage{amsmath,amssymb,amscd,amsthm}
\usepackage{color}
\usepackage{amsmath,amsthm,amssymb,graphicx}
\usepackage{epsfig}
\newtheorem{theorem}{Theorem}

\theoremstyle{definition}
\newtheorem{remark}{Remark}
\theoremstyle{plane}

\def \beq{ \begin{equation} }
\def \eeq{\end{equation}}

\def \R {\mathbb R}
\def \M {\mathbb M}
\def \H {\mathbb H}
\def \S {\mathbb S}

\usepackage[top=3.8cm, bottom=3.7cm, left=3.5cm, right=3.5cm]{geometry}

\title{Bifurcations of the Lagrangian orbits from the classical to the curved 3-body problem}
\begin{document}
\maketitle
\markboth{Florin Diacu}{Lagrangian orbits in the classical and curved 3-body problem}
\author{\begin{center}{\bf Florin Diacu}\\
\small Pacific Institute for the Mathematical Sciences\\ and\\
Department of Mathematics and Statistics\\
University of Victoria\\
Victoria, Canada\\
Email: diacu@uvic.ca
\end{center}}

\begin{center}
\today
\end{center}


\begin{abstract}
We consider the 3-body problem of celestial mechanics
in Euclidean, elliptic, and hyperbolic spaces, and study how the Lagrangian (equilateral) relative equilibria bifurcate when the Gaussian curvature varies. We thus prove the existence of new classes of orbits. In particular, we find some families of isosceles triangles, which occur in elliptic space.

\end{abstract}

\vspace{-0.2cm}

\section{Introduction}

The idea of extending Newtonian gravitation to spaces of constant curvature appeared for the first time in the work of J\'anos Bolyai and Nikolai Lobachevsky, who independently considered it in the framework of hyperbolic geometry, \cite{Bol}, \cite{Lob}. They apparently thought in terms of Gauss's flux law for gravity, according to which two celestial bodies attract each other with a force inversely proportional to the area of a sphere of radius equal to the distance between the bodies. 

As it happened to their work in hyperbolic geometry, it took many years until the research community recognized the importance of studying the motion of point masses gravitating in spaces of constant curvature. Lejeune Dirichlet apparently grasped the value of this problem in the 1850s, but in spite of researching it he published nothing in this direction, \cite{Sch1}. The first to write down the expression of the potential in hyperbolic space was Ernest Schering, almost four decades after Bolyai and Lobachevsky. In his derivation, Schering used the fact that the area of a sphere of radius $r$ in $\mathbb H_\kappa^3$ is $4\pi|\kappa|^{-1}\sinh^2(|\kappa|^{1/2} r)$, where $\mathbb H_\kappa^3$ denotes the hyperbolic space of curvature $\kappa<0$, \cite{Sch1}, \cite{Sch2}. It was then natural to extend this problem to elliptic space, so Wilhelm Killing defined a force inversely proportional to the area $4\pi\kappa^{-1}\sin^2(\kappa^{1/2}r)$ of a sphere of radius $r$ in the complete elliptic geometry of the sphere $\mathbb S_\kappa^3$ of curvature $\kappa>0$, \cite{Killing}. Currently, the topic is intensely researched thanks to the new approach initiated in \cite{Diacu01}, \cite{Diacu11}, and \cite{Diacu12} (see also \cite{Diacu02},
\cite{Diacu03}, \cite{Diacu05}, \cite{Diacu07}, \cite{Diacu77},
\cite{Diacu06}, \cite{Diacu08}, \cite{Diacu09}, \cite{Diacu10}, \cite{Diacu13}, \cite{Diacu-Popa}, \cite{Diacu14}, \cite{Garcia}, \cite{Mar}, \cite{Mar2}, \cite{Perez}, \cite{Shc}, \cite{Tib1}, \cite{Tib2}, \cite{Tib3}, \cite{Zhu}).

To get more insight into the problem, let's consider the motion of two point masses, $m_1$ and $m_2$, in the three existing kinds of spaces of constant curvature we are interested in, $\mathbb H_\kappa^3$ ($\kappa<0$), $\mathbb R^3$ ($\kappa=0$), and $\mathbb S_\kappa^3$ ($\kappa>0$), and assume that the corresponding hyperbolic, Euclidean, and elliptic distance between the point masses is a function $r=r(t)$. Let us take the units such that the gravitational constant is 1. Then the attracting forces are given by 
$$
F_{\mathbb H_\kappa^3}(r)=\frac{|\kappa| m_1m_2}{\sinh^2(|\kappa|^{1/2}r)},\ \ \ F_{\mathbb R^3}(r)=\frac{m_1m_2}{r^2},\ \ \ F_{\mathbb S_\kappa^3}(r)=\frac{\kappa m_1m_2}{\sin^2(\kappa^{1/2}r)}.
$$
The corresponding force functions (the negatives of the potentials, whose derivatives relative to $r$ provide the above forces), take the form
$$
V_{\mathbb H_\kappa^3}(r)=m_1m_2\coth(|\kappa|^{1/2} r),\ \ \ V_{\mathbb R^3}(r)=\frac{m_1m_2}{r},\ \ \ V_{\mathbb S_\kappa^3}(r)=m_1m_2\cot(\kappa^{1/2} r).
$$
The classical Newtonian law is recovered in the limit since
$$
\lim_{\kappa\to 0,\ \! \kappa<0}F_{\mathbb H_\kappa^3}(r)
=
\lim_{\kappa\to 0,\ \! \kappa>0}F_{\mathbb S_\kappa^3}(r)
=
F_{\mathbb R^3}(r),
$$
and similar relationships stand true for the force functions.

But in spite of recovering the Newtonian law in the limit, how
can we know that this is the most natural extension of gravity to spaces of constant curvature? After all, there are infinitely many ways of obtaining the classical force when $\kappa\to 0$. In the absence of any physical or observational tests, is Gauss's law good enough reason for the introduction of the above definitions? Some researchers obviously believed it was not, for Rudolf Lipschitz came up with another expression of the force in curved space, \cite{Lip1}. His proposed law, however, was short lived. First, the solutions to his equations of motion involved elliptic integrals, so they could not be solved. Second, some strong arguments occurred in favour of the approach initiated by Bolyai and Lobachevsky. Indeed, at the beginning of the 20th century, Heinrich Liebmann proved two important results relative to the Kepler problem (which studies the motion of one body about a fixed attractive centre). The first was that, like in the Euclidean case, the potential is a harmonic function, i.e.\ a solution of the Laplace-Beltrami equation. The second property showed that all bounded orbits are closed, a result originally proved by Joseph Bertrand for the Newtonian force in Euclidean space, \cite{Ber}. These reasons were convincing enough to accept, even though only on the basis of mathematical analogies and in the absence of physical experiments or observations, that the force functions $V_{\mathbb H^3}$ and $V_{\mathbb S^3}$ provide the correct extension of $V_{\mathbb R^3}$ to spaces of constant curvature. In fact, this conclusion should not be too surprising: Newton's gravitational law  does not shed any light on the physical nature of gravity, it only describes the motion of celestial bodies fairly well. But as we will further explain, the above generalization of the gravitational law may also have merits that transcend mathematics, although this is first of all a mathematical problem.

In 1821 Carl Friedrich Gauss performed some topographic measurements in which he measured the angles of a triangle formed by three mountain peaks, \cite{Diacu05}. His aim was apparently to find out whether space was hyperbolic or elliptic, should the sum of the angles add to less or more than $\pi$ radians. His attempt was inconclusive since the measurement errors were above the potential deviation from $\pi$. Lobachevsky tried to decide whether the physical space was curved  by measuring the parallax of Sirius, which he treated as the ideal point of an angle of parallelism, but he couldn't  draw any conclusion either. These and other 19th-century attempts to determine the curvature of space are described in \cite{Kragh}. In his famous 1854-paper that laid the foundations of differential geometry, Bernhard Riemann reiterated the importance of this problem, \cite{Riem}.  More recently the so-called boomerang experiment, involving the background radiation, was also initiated in the hope to answer this question, but again without success, \cite{Diacu05}. All these efforts proved, however, that even if the large-scale universe is not Euclidean, the deviation from zero curvature, if any, must be extremely small. 

The mathematical extension of gravitation to spaces of non-zero constant curvature offers another way to approach the problem of determining the curvature of the physical space. 
If, for instance, we could prove that certain orbits exist only in, say, Euclidean space, but not in hyperbolic and elliptic space, and we succeed to find these orbits through astronomical observations, then we would be able to conclude that the universe is flat. So the study of the $N$-body problem in spaces of constant Gaussian curvature (or curved $N$-body problem, as we informally call it), may present interest beyond mathematics.

\section{Our goal}

In the Newtonian 3-body problem of the Euclidean space there are two classes of relative equilibria, the Lagrangian and the Eulerian, named after those who proved their existence. Leonhard Euler found the collinear orbits in 1762, \cite{Euler}. Joseph Louis Lagrange rediscovered them a decade later, but also found the class of relative equilibria given by equilateral triangles, \cite{Lagrange}. In this paper we will study the latter type of solutions and analyze how they bifurcate from the Euclidean plane to 2-spheres and hyperbolic 2-spheres. The reason why we restrict our study to the 2-dimensional case is that all relative equilibria are planar in the Euclidean space. However, it is important to mention that in the curved problem there are relative equilibria that do not necessarily move on great 2-spheres or great hyperbolic 2-spheres, but this phenomenon can take place only for more than three bodies. In previous work, we provided such examples for the curved 3-dimensional problem, \cite{Diacu03}, \cite{Diacu05}, \cite{Diacu06}. So when investigating the motion of more than three bodies, a 3-dimensional study would also be necessary. In this paper, however, we can restrict our considerations to the 2-dimensional case without any loss of generality.  

It will be interesting to notice in the following sections that the dynamics on 2-spheres is richer than on hyperbolic 2-spheres, in the sense that there are more triangular relative equilibria on $\mathbb S_\kappa^2$ than on $\mathbb H_\kappa^2$. Perhaps one of the reasons for this difference can be found in the inequalities
$$
\frac{1}{\sinh^2 r}<\frac{1}{r^2}<\frac{1}{\sin^2 r},
$$
which imply that 
$$
F_{\mathbb H_\kappa^2}<F_{\mathbb R^2}<
F_{\mathbb S_\kappa^2},
$$
so the component of the force derived from the potential (since, for $\kappa\ne 0$, the acceleration also involves the force due to the constraints, which keep the bodies on the manifolds) is stronger on spheres than
on hyperbolic spheres.

\section{Summary of the results}

After introducing the equations of motion in Section 4, we study in Section 5 the existence of relative equilibria on and near the equator of $\mathbb S_\kappa^2$. Our first theorem
provides a different proof for a result we first published in \cite{Diacu02}, namely that for every acute scalene triangle inscribed in the equator, we can find a class of masses
$m_1, m_2, m_3>0$, which if placed at the vertices of the triangle form a relative equilibrium that rotates around the equator with any chosen nonzero angular velocity. Then in Theorem 2 we prove a qualitative property: if the three bodies move either in the northern or in the southern closed hemisphere and one of the bodies is on the equator, then all three bodies must move on the equator. In Theorem 3, we find new classes of relative equilibria that move on non-great circles parallel with the plane of the equator, namely those given by isosceles non-equilateral triangles. These relative equilibria occur for masses $m_1=:M>0$ and $m_2=m_3=:m>0$, with $M<2m$, in two pairs of bands symmetric to the equator, as shown in Figure 2. For one of the classes the shape of the isosceles triangle is unique for the given masses, whereas in the other class two distinct shapes are possible. For $M=m$ we recover the Lagrangian (equilateral) relative equilibria, which exist on all circles parallel with (and including) the equator, a result we first proved in \cite{Diacu11}.

In Section 6 we study the existence of relative equilibria parallel with the $xy$-plane in $\mathbb H_\kappa^2$.
We witness here the first manifestation of the difference
between the richness of orbits that occur on spheres and hyperbolic spheres by proving that there are no isosceles relative equilibria parallel with the $xy$-plane other than the Lagrangian solutions of equal masses. In Section 7 we introduce some equivalent form of the equations of motion that is more suitable for the study of Lagrangian (equilateral) relative equilibria. This form of the equations has been suggested to us by Carles Sim\'o, who used them in a recent paper on the restricted curved 3-body problem he wrote with 
Regina Mart\'inez, \cite{Mar2}. In Section 8 we take a glimpse at the simple case of the planetary problem, in which two masses are negligible, and prove in Theorem 5 that there occur no bifurcations of the Lagrangian relative equilibria when passing from $\mathbb S_\kappa^2$ to $\mathbb R^2$ to $\mathbb H_\kappa^2$ as kappa goes from $+\infty$ to $-\infty$.

In Section 9 we focus on the case of one negligible mass.
In Theorem 6 we assume that two bodies of equal mass move on a non-equatorial circle of the sphere $\mathbb S_\kappa^2$, being always diametrically opposed, and form a Lagrangian relative equilibrium with a third body, which has negligible mass. Then the circle on which the two bodies move must have its radius equal to $(2\kappa)^{-1/2}$ and the third body must move on the equator. In other words, given $\kappa$, the size of the equilateral triangle does not depend on the value of the equal masses, but the angular velocity of the equilateral triangle does. In Theorem 7 we then show
that there are no Lagrangian relative equilibria in $\mathbb H_\kappa^2$ with two bodies of equal mass and a third body of negligible mass, finding again a manifestation of the difference between the richness of orbits that occur on spheres and hyperbolic spheres. Finally in Theorem 8 we show that
if one of the three masses is negligible, then
there are no Lagrangian relative equilibria in $\mathbb H_\kappa^2$ and there are no Lagrangian relative equilibria in $\mathbb S_\kappa^2$ either if the curvature $\kappa$ is sufficiently small, unless the two non-negligible masses are equal, in which case the orbits occur as stated in Theorem 6.

\section{Equations of motion}

The goal of this section is to define the spaces of constant curvature in which the bodies move and introduce the equations of motion that extend Newton's classical system beyond the Euclidean case. Consider for this the family of 2-dimensional manifolds $(\M_\kappa^2)_{\kappa\in\R}$, with
$$
\M_\kappa=
\begin{cases}
\S_\kappa^2\hspace{0.3cm} {\rm for}\ \ \kappa>0\cr
\R^2\hspace{0.25cm} {\rm for}\ \ \kappa=0\cr
\H_\kappa^2\hspace{0.21cm} {\rm for}\ \ \kappa<0,
\end{cases}
$$
where the set $\R^2$ is the horizontal Euclidean plane of curvature $\kappa=0$ through the origin of the coordinate system,
$$
\R^2=\{(x,y,z)\ \!|\ \! z=0\},
$$ 
the sets $\S_\kappa^2$ denote the spheres
$$
\mathbb S_\kappa^2=\{(x,y,z)\ \!|\ \! \kappa(x^2+y^2+z^2)+2\kappa^{1/2}z=0\}
$$
centred at $(0,0,-\kappa^{-1/2})$ of curvature $\kappa>0$,
and the sets $\H_\kappa^2$ are the hyperbolic spheres of curvature $\kappa<0$ represented by the upper sheets of hyperboloids of two sheets,
$$
\H_\kappa^2=\{(x,y,z)\ \!|\ \! \kappa(x^2+y^2-z^2)+2|\kappa|^{1/2}z=0,\ z\ge 0\},
$$
whose vertex is tangent to the $xy$-plane.
The spheres $\S_\kappa^2$ and the plane $\R^2$ are embedded in $\R^3$, which has the standard inner product of signature $(+,+,+)$, whereas $\H_\kappa^3$ is embedded in the Minkowski space $\R^{2,1}$, endowed with the Lorentz inner product of signature $(+,+,-)$.
All these manifolds have a single point in common, the origin $(0,0,0)$ of the coordinate system (see Figure \ref{common}).

\begin{figure}[htbp] 
   \centering
   \includegraphics[width=2in]{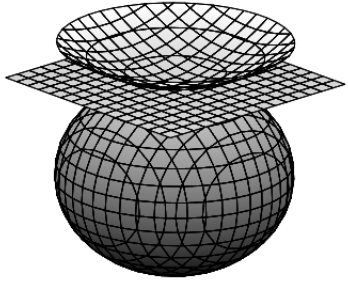}
   \caption{\small A snapshot of the continuous transition from the spheres $\mathbb S_\kappa^2$, of constant curvature $\kappa>0$, to the plane $\mathbb R^2$, of curvature $\kappa=0$, and to the hyperbolic spheres $\mathbb H_\kappa^2$, of curvature $\kappa<0$, as $\kappa$ decreases from $+\infty$ to $-\infty$.}
   \label{common}
\end{figure}

In a previous paper, \cite{Diacu77}, we obtained the equations of motion of the $N$-body problem on the above 2-dimensional manifolds of constant curvature as well as in their 3-dimensional counterparts. But here we will consider only the case $N=3$ on $\mathbb M_\kappa^2$. Then the equations of motion have the form
\begin{equation}\label{new}
\begin{cases}
\ddot x_i=\sum_{j=1, j\ne i}^3\frac{m_j\Big[x_j-\Big(1-\frac{\kappa r_{ij}^2}{2}\Big)x_i\Big]}{r_{ij}^3\Big(1-\frac{\kappa r_{ij}^2}{4}\Big)^{3/2}}-\kappa (\dot{\bf r}_i\cdot\dot{\bf r}_i)x_i\cr
\ddot y_i=\sum_{j=1, j\ne i}^3\frac{m_j\Big[y_j-\Big(1-\frac{\kappa r_{ij}^2}{2}\Big)y_i\Big]}{r_{ij}^3\Big(1-\frac{\kappa r_{ij}^2}{4}\Big)^{3/2}}-\kappa (\dot{\bf r}_i\cdot\dot{\bf r}_i)y_i\cr
\ddot z_i=\sum_{j=1, j\ne i}^3\frac{m_j\Big[z_j-\Big(1-\frac{\kappa r_{ij}^2}{2}\Big)z_i+\frac{\sigma|\kappa|^{1/2} r_{ij}^2}{2}\Big]}{r_{ij}^3\Big(1-\frac{\kappa r_{ij}^2}{4}\Big)^{3/2}}- (\dot{\bf r}_i\cdot\dot{\bf r}_i)(\kappa z_i+\sigma|\kappa|^{1/2}),
\end{cases}
\end{equation}
$i=1,2,3$, where $m_1,m_2,m_3>0$ represent the masses,
$$
{\bf r}_i=(x_i,y_i,z_i),\ \ \dot{\bf r}_i=(\dot{x}_i, \dot{y}_i, \dot{z}_i), \ i=1,2,3,
$$ 
are, respectively, the position vectors and the velocities of the bodies, $\sigma$ denotes the signum function: $\sigma=+1$ for $\kappa\ge 0$ and $\sigma=-1$ for $\kappa<0$, and 
$$
r_{ij}:=
[(x_i-x_j)^2+(y_i-y_j)^2+\sigma(z_i-z_j)^2]^{1/2},\ \ i,j=1,2,3,
$$
is the Euclidean distance between the bodies of masses $m_i$ and $m_j$ in $\mathbb R^3$, but the Minkowski distance in
$\mathbb R^{2,1}$.

At $t=0$, the initial conditions must have the six constraints
\begin{equation}
\label{constraint-1}
\kappa(x_i^2+y_i^2+\sigma z_i^2)+2|\kappa|^{1/2}z_i=0,\ \ i=1,2,3,
\end{equation}
\begin{equation}
\label{constraint-2}
\kappa{\bf r}_i\cdot\dot{\bf r}_i+|\kappa|^{1/2}\dot z_i=0,\ \ i=1,2,3.
\end{equation}
Since the sets $\mathbb S_\kappa^2$ and $\mathbb H_\kappa^2$ are invariant for the above equations of motion, these conditions are satisfied for all $t$. They are identically satisfied on $\mathbb R^2$, i.e.\ for $\kappa=0$.

Let us take a value of $\kappa$ and fix the point masses $m_i$ and $m_j$ on the manifold $\mathbb M_\kappa^2$. If we let $\kappa$ vary and keep $r_{ij}$ constant, then the coordinates of the point masses vary with $\kappa$. In particular, the values of $z_i,\ i=1,2,3$, and consequently the values of the expressions $(z_i-z_j)^2, \ i,j=1,2,3, i\ne j,$ become small when $\kappa$ gets close to 0 and vanish at $\kappa=0$. Consequently, for $\kappa=0$  we recover the classical Newtonian equations of the $3$-body problem in the Euclidean plane,
\begin{equation}
\ddot{\bf r}_i=\sum_{j=1, j\ne i}^3\frac{m_j({\bf r}_j-{\bf r}_i)}{r_{ij}^3}, \ \ i=1,2,3,
\end{equation}
where ${\bf r}_i=(x_i,y_i,0),\ i=1,2,3$.

\section{Relative equilibria on and near the equator of $\mathbb S_\kappa^2$}

In this section we will introduce some coordinates that allow us to better detect relative equilibria on and near the equator of $\mathbb S_\kappa^2$, namely $(\varphi,\omega)$, where $\varphi$ measures the angle from the $x$-axis in the $xy$-plane and $\omega$ is the height on the vertical $z$-axis. To express the coordinates of each body in this way, let us remark that from 
the constraints \eqref{constraint-1}, which can be written on
$\mathbb S_\kappa^2$ as
$$
x_i^2+y_i^2+\omega_i^2+2\kappa^{-1/2}\omega_i=0, \ i=1,2,3, 
$$
we obtain the relations
$$
\Omega_i:=x_i^2+y_i^2=-\kappa^{-1/2}\omega_i(\kappa^{1/2}\omega_i+2)\ge 0,\ \ i=1,2,3.
$$
Notice that in the inequality, which follows from the fact that $\omega_i\in[-2\kappa^{-1/2}, 0]$, equality occurs only when the body is at the North or South Pole.

We can now express the positions of the bodies in $(\varphi, \omega)$-coordinates with the help of the polar transformations
$$
x_i=\Omega_i^{1/2}\cos\varphi_i,\ \ y_i=\Omega_i^{1/2}\sin\varphi_i, \ \ i=1,2,3.
$$
Some straightforward computations show that the equations of motion \eqref{new} take the form
\begin{equation}\label{equatorial}
\begin{cases}
\ddot\varphi_i=\Omega_i^{-1/2}\sum_{j=1, j\ne i}^3
\frac{m_j\Omega_j^{1/2}\sin(\varphi_j-\varphi_i)}{\rho_{ij}^3\big(1-\frac{\kappa\rho_{ij}^2}{4}\big)^{3/2}}-\frac{\dot\varphi_i\dot\Omega_i}{\Omega_i}  \cr
\ddot\omega_i=\sum_{j=1, j\ne i}^3\frac{m_j\big[\omega_j-\omega_i+\frac{\kappa\rho_{ij}^2}{2}(\omega_i+\kappa^{-1/2})\big]}{\rho_{ij}^3\big(1-\frac{\kappa\rho_{ij}^2}{4}\big)^{3/2}}-(\kappa\omega_i+\kappa^\frac{1}{2})\big(\frac{\dot{\Omega}_i^2}{4\Omega_i}+\dot{\varphi}_i^2\Omega_i+\dot{\omega}_i^2\big),
\end{cases}
\end{equation}
$i=1,2,3$, where
$$
\dot\Omega_i=-2\kappa^{-1/2}\dot\omega_i(\kappa^{1/2}\omega_i+1),\ \ i=1,2,3,
$$
$$
\rho_{ij}^2=\Omega_i+\Omega_j-2\Omega_i^{1/2}\Omega_j^{1/2}\cos(\varphi_i-\varphi_j)+(\omega_i-\omega_j)^2,\ i, j=1,2,3, \ i\ne j. 
$$

\subsection{Relative equilibria on the equator} Let us first seek relative equilibria on the equator $\omega=-\kappa^{-1/2}.$ Then
$$
\omega_i=-\kappa^{-1/2},\ \ \dot\omega_i=0,\ \ \Omega_i=\kappa^{-1},\ \ \dot\Omega_i=0,\ \ i=1,2,3,
$$
$$
\rho_{ij}^2=2\kappa^{-1}[1-\cos(\varphi_i-\varphi_j)], \ i, j=1,2,3, \ i\ne j. 
$$
In this case, the equations in \eqref{equatorial} corresponding to $\ddot\omega_i, \ i=1,2,3$, are identically satisfied, and the equations corresponding to $\ddot\varphi_i, \ i=1,2,3$, lead to
the system
\begin{equation}\label{varphis}
\ddot\varphi_i=\kappa^{3/2}\sum_{j=1,j\ne i}^3\frac{m_j\sin(\varphi_j-\varphi_i)}{|\sin(\varphi_j-\varphi_i)|^3}, \ \ i=1,2,3.
\end{equation}
But for relative equilibria the angular velocity is the same constant for all particles, so if we denote this velocity by $\alpha\ne 0$  we can assume that
\begin{equation}\label{vars}
\varphi_1=\alpha t+a_1, \ \varphi_2=\alpha t+a_2,\ \varphi_3=\alpha t+a_3, 
\end{equation}
where $t$ represents the time and $a_1, a_2, a_3$ are real constants. Therefore
$$
\ddot\varphi_i=0, \ \ i=1,2,3.
$$
With the notation
$$
s_1:=\frac{\kappa^{3/2}\sin(\varphi_1-\varphi_2)}{|\sin(\varphi_1-\varphi_2)|^3}, \ \  s_2:=\frac{\kappa^{3/2}\sin(\varphi_2-\varphi_3)}{|\sin(\varphi_2-\varphi_3)|^3}, \ \ s_3:=\frac{\kappa^{3/2}\sin(\varphi_3-\varphi_1)}{|\sin(\varphi_3-\varphi_1)|^3},
$$ 
which are constants, equations \eqref{varphis} take the form
\begin{equation}
\begin{cases}
\ \ \ \! m_1s_1-m_3s_2\ \ \ \ \ \ \ \ \ \ \ =0\cr
-m_2 s_1\ \ \ \ \ \ \ \ \ \ \      +m_3s_3 \hspace{0.3mm} =0\cr
\ \ \ \ \ \ \ \ \ \ \ \ \ \ \!  m_2s_2-m_1s_3 \hspace{0.2mm} =  0.
\end{cases}
\end{equation}
This system has infinitely many solutions,
$$
s_1=\frac{m_3}{m_2}\gamma,\ \ s_2=\frac{m_1}{m_2}\gamma,\ \ s_3=\gamma,
$$
with $\gamma\ne 0$, such that $s_1, s_2, s_3$ make sense.
We have thus obtained a new proof for a result we previously published in \cite{Diacu02}:

\begin{theorem}
For every acute scalene triangle inscribed in the equator of $\mathbb S_\kappa^2$, we can find a class of masses $m_1, m_2, m_3>0$, which if placed at the vertices of the triangle form a relative equilibrium that rotates around the equator with any chosen nonzero angular velocity.
\end{theorem}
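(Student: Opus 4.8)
The configuration is fixed by the triangle, so the reduction carried out above already delivers the algebraic system in $s_1,s_2,s_3$; what remains is one geometric input. The plan is to read the three constants off the given triangle, show that they share a common sign, and then invert the linear system to produce positive masses. First I would label the vertices so that their angular coordinates satisfy $0\le a_1<a_2<a_3<2\pi$, i.e.\ the bodies are ordered counterclockwise along the equator. By \eqref{vars} the differences $\varphi_i-\varphi_j=a_i-a_j$ are then time-independent, so $s_1,s_2,s_3$ are genuine constants determined by the triangle alone, and \eqref{varphis} has already collapsed to the homogeneous system in the masses.

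The crux is a geometric translation of the word \emph{acute}. By the inscribed-angle theorem the interior angle at each vertex is half the central angle subtending the opposite arc, so the triangle is acute precisely when each of the three consecutive arcs
$$a_2-a_1,\qquad a_3-a_2,\qquad 2\pi-(a_3-a_1)$$
lies strictly in $(0,\pi)$. I would draw two consequences. First, none of these arcs equals $\pi$, so no two vertices are antipodal on the equator; equivalently none of the differences $\varphi_i-\varphi_j$ is an integer multiple of $\pi$, which is exactly what keeps the denominators $|\sin(\varphi_j-\varphi_i)|^3$ nonzero and makes $s_1,s_2,s_3$ finite and nonzero. Second, $a_1-a_2,\ a_2-a_3\in(-\pi,0)$ while $a_3-a_1\in(\pi,2\pi)$, so the sines $\sin(a_1-a_2),\ \sin(a_2-a_3),\ \sin(a_3-a_1)$ are all negative. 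Since $\kappa^{3/2}>0$, this forces $s_1,s_2,s_3<0$: the three constants share a common sign.

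With this in hand the rest is algebra. The homogeneous system in the masses has vanishing determinant (as the displayed solution shows) and rank two, since the $s_i$ do not vanish, so its solution set is the one-parameter family $s_1=(m_3/m_2)\gamma,\ s_2=(m_1/m_2)\gamma,\ s_3=\gamma$. Choosing $\gamma=s_3$ and any $m_2>0$, I would set $m_1=(s_2/s_3)\,m_2$ and $m_3=(s_1/s_3)\,m_2$; because $s_1,s_2,s_3$ all have the same sign, both ratios $s_2/s_3$ and $s_1/s_3$ are positive, whence $m_1,m_2,m_3>0$. This is precisely the advertised class of masses, parametrized by the overall scale $m_2$, with the scalene hypothesis guaranteeing that the three arcs, and hence the three mass ratios, are distinct. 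Finally, the angular velocity $\alpha$ never entered \eqref{varphis}: on the equator the centripetal acceleration is supplied entirely by the constraint force, so the tangential balance defining the relative equilibrium is independent of $\alpha$, and any $\alpha\ne0$ is admissible.

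The main obstacle is the geometric step, namely recognizing that acuteness is the exact condition that simultaneously rules out antipodal pairs (keeping the forces finite) and forces $s_1,s_2,s_3$ to share a sign (making positive masses possible); an obtuse angle would flip one sine and destroy positivity, while a right angle would produce an antipodal pair. Everything else is either already carried out in the reduction above or a one-line consequence of this common-sign property.
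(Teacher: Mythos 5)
Your proposal is correct and takes essentially the same route as the paper: the identical reduction to the homogeneous linear system in $s_1,s_2,s_3$ followed by the one-parameter family of mass ratios $m_1/m_2=s_2/s_3$, $m_3/m_2=s_1/s_3$, with the angular velocity dropping out because $\ddot\varphi_i=0$. The one thing you add is the explicit sign analysis (acuteness forces all three arcs into $(0,\pi)$, hence $s_1,s_2,s_3$ all negative, hence positive mass ratios, while a right or obtuse angle would respectively create an antipodal pair or flip one sign), which the paper leaves implicit in the phrase ``such that $s_1,s_2,s_3$ make sense''; this is a welcome completion of the same argument rather than a different method.
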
 

It is interesting to note that for relative equilibria on $\mathbb S_\kappa^2$ if one body moves on the equator, then all
bodies must move on the equator, as long as the bodies are assumed to move only in the upper, or only in the lower, closed hemisphere (i.e.\ including the equator). Let us now formally state and prove this result.

\begin{theorem}
Consider a relative equilibrium on $\mathbb S_\kappa^2$ for which all the bodies move either in the northern or in the southern closed hemisphere. Then if one of the bodies moves on the equator, all the bodies move on the equator.
\end{theorem}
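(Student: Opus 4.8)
The plan is to impose the relative-equilibrium conditions directly on the second (that is, the $\ddot\omega_i$) equation of system \eqref{equatorial} and then extract a sign-definite algebraic identity from the equation of the body lying on the equator. For a relative equilibrium the whole configuration rotates rigidly about the $z$-axis, so each height $\omega_i$ and each $\Omega_i=x_i^2+y_i^2$ is constant in time while $\dot\varphi_i\equiv\alpha$ for a common constant $\alpha$. Hence $\dot\omega_i=\ddot\omega_i=0$, $\dot\Omega_i=0$, and $\ddot\varphi_i=0$, and the $\ddot\omega_i$ equations of \eqref{equatorial} collapse to the algebraic system
\begin{equation}\label{omega-rel-eq}
\sum_{j=1,\,j\ne i}^3\frac{m_j\big[\omega_j-\omega_i+\tfrac{\kappa\rho_{ij}^2}{2}(\omega_i+\kappa^{-1/2})\big]}{\rho_{ij}^3\big(1-\tfrac{\kappa\rho_{ij}^2}{4}\big)^{3/2}}=(\kappa\omega_i+\kappa^{1/2})\,\alpha^2\Omega_i,\qquad i=1,2,3.
\end{equation}

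Next I would measure position relative to the equatorial plane by setting $\zeta_i:=\omega_i+\kappa^{-1/2}$, so that the equator corresponds to $\zeta_i=0$, the northern closed hemisphere to $\zeta_i\ge 0$, and the southern closed hemisphere to $\zeta_i\le 0$; note also that $\kappa\omega_i+\kappa^{1/2}=\kappa\zeta_i$. Suppose body $k$ lies on the equator, i.e.\ $\zeta_k=0$. Substituting $\omega_k=-\kappa^{-1/2}$ into the $i=k$ equation of \eqref{omega-rel-eq} annihilates both the self-term in each numerator and the entire centrifugal right-hand side, leaving
\begin{equation}\label{equator-identity}
\sum_{j=1,\,j\ne k}^3\frac{m_j\,\zeta_j}{\rho_{kj}^3\big(1-\tfrac{\kappa\rho_{kj}^2}{4}\big)^{3/2}}=0.
\end{equation}

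Finally I would invoke a sign argument. The masses $m_j$ are positive, and since two bodies in a relative equilibrium are never antipodal on $\mathbb{S}_\kappa^2$ (that configuration is a singularity of the potential), every chord satisfies $\rho_{kj}<2\kappa^{-1/2}$, whence $1-\tfrac{\kappa\rho_{kj}^2}{4}>0$ and each coefficient in \eqref{equator-identity} is strictly positive. If all bodies lie in the northern closed hemisphere then every $\zeta_j\ge 0$, so \eqref{equator-identity} expresses zero as a sum of nonnegative terms; each term must therefore vanish, forcing $\zeta_j=0$, i.e.\ $\omega_j=-\kappa^{-1/2}$, for every $j$, so all three bodies lie on the equator. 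The southern case is identical, with all $\zeta_j\le 0$. The only delicate points are justifying $\dot\Omega_i=0$ from the rigidity of the relative equilibrium and confirming the strict positivity $1-\tfrac{\kappa\rho_{kj}^2}{4}>0$ via the exclusion of antipodal configurations; once these are in hand the conclusion is immediate, so I expect no serious obstacle beyond keeping the bookkeeping of the vanishing terms straight.
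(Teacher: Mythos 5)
Your proposal is correct and follows essentially the same route as the paper: both reduce the $\ddot\omega$-equation of the equatorial body, observe that the self-terms and the centrifugal term vanish there, and conclude by positivity of the remaining coefficients that the heights of the other bodies must equal the equatorial one. Your substitution $\zeta_i=\omega_i+\kappa^{-1/2}$ is just a notational repackaging of the paper's $\kappa^{-1/2}-u$, $\kappa^{-1/2}-v$, and your extra remarks (rigid rotation giving $\dot\Omega_i=0$, exclusion of antipodal pairs giving $1-\kappa\rho_{kj}^2/4>0$) only make explicit what the paper leaves implicit.
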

\begin{proof}
Assume, without loss of generality, that the bodies are in
the upper closed hemisphere, i.e. 
$$
\omega_1=-\kappa^{-1/2},\ \ \omega_2=-u,\ \ \omega_2=-v,\ \
u,v\in[0,\kappa^{-1/2}].
$$
Then the equation in \eqref{equatorial} corresponding to $\ddot\omega_1$ reduces to
$$
\frac{m_2(\kappa^{-1/2}-u)}{\rho_{12}^3\Big(1-\frac{\kappa\rho_{12}^2}{4}\Big)^{3/2}}+\frac{m_3(\kappa^{-1/2}-v)}{\rho_{13}^3\Big(1-\frac{\kappa\rho_{13}^2}{4}\Big)^{3/2}}=0.
$$
Since the masses and denominators in the left hand side of the above equation are positive and $0\le u,v\le \kappa^{-1/2}$, it follows that this equation can be satisfied only if $u=v=\kappa^{-1/2}$. Consequently all the bodies move on the equator.
\end{proof}
\begin{remark}
The generalization of the above statement and proof to any number $N\ge 3$ of bodies is straightforward.
\end{remark}

\subsection{Relative equilibria parallel with the equator}

In this subsection we will prove the existence of some
isosceles relative equilibria that rotate on non-geodesic 
circles parallel with the plane of the equator. Here is the precise statement of our result.

\begin{theorem}\label{isos-re}
For any sphere $\mathbb S_\kappa^2$ and masses $m_1=:M>0, m_2=m_3=:m>0$, with $M<2m$, there exist two non-geodesic circles parallel with the plane of the equator, symmetrically placed at distance $r\kappa^{-1/2}$ from it, such that the three bodies can form isosceles, non-equilateral, relative equilibria that rotate on any of those parallel circles. Moreover (see Figure \ref{zones}),

(i) the shape of the triangle is unique if $r\in(0,\sqrt{3}/3]\cup\{3/5\}$,

(ii) there are two possible shapes of the triangle if $r\in(\sqrt{3}/3,3/5)$.

\noindent In each case the rotation takes place with constant nonzero angular velocity, whose value depends on $\kappa, m$, and $M$.
\end{theorem}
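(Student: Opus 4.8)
The plan is to exploit the symmetry forced by $m_2=m_3=:m$ and look for relative equilibria in which all three bodies lie on a single circle parallel to the equator, with the two equal masses in symmetric positions. Working in the $(\varphi,\omega)$ coordinates of \eqref{equatorial}, I would place the bodies on the circle $\omega_i=\omega$, so that $\Omega_i=\Omega=-\kappa^{-1/2}\omega(\kappa^{1/2}\omega+2)$ for each $i$, and set $\varphi_1=\alpha t$, $\varphi_2=\alpha t+\gamma$, $\varphi_3=\alpha t-\gamma$ with $\alpha\ne 0$ constant. The relative-equilibrium ansatz then gives $\dot\omega_i=\dot\Omega_i=0$ and $\ddot\varphi_i=\ddot\omega_i=0$, while the two sides issuing from $m_1$ are automatically equal, $\rho_{12}=\rho_{13}=:\rho$ with $\rho^2=2\Omega(1-\cos\gamma)$, and the base is $\rho_{23}=:\rho'$ with $\rho'^2=2\Omega(1-\cos 2\gamma)$, yielding a genuinely isosceles triangle inscribed in the circle.

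First I would substitute this ansatz into \eqref{equatorial}. The $\ddot\varphi_1$ equation holds identically by symmetry, leaving three conditions. The $\ddot\varphi_2$ equation, after dividing by $\sin\gamma$, reads $\frac{M}{\rho^3(1-\kappa\rho^2/4)^{3/2}}+\frac{2m\cos\gamma}{\rho'^3(1-\kappa\rho'^2/4)^{3/2}}=0$. From the two $\ddot\omega$ equations, after cancelling the common factor $\kappa\omega+\kappa^{1/2}=\kappa(\omega+\kappa^{-1/2})$, which is nonzero off the equator, I would solve one for $\alpha^2$ and, by subtracting, obtain the $\alpha$-free relation $\frac{M-2m}{\rho(1-\kappa\rho^2/4)^{3/2}}+\frac{m}{\rho'(1-\kappa\rho'^2/4)^{3/2}}=0$. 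Since all masses, distances, and denominators are positive, the first relation forces $\cos\gamma<0$ and the second forces $M-2m<0$; thus the hypothesis $M<2m$ is exactly what makes both balances possible.

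Next I would eliminate the curvature-dependent factors. Using $\rho'^2/\rho^2=2(1+\cos\gamma)$ and combining the two balance relations, the factors $(1-\kappa\rho^2/4)^{3/2}$ and $\Omega$ cancel, and I expect the purely shape-determining equation $M+2m\cos\gamma=0$, i.e.\ $\cos\gamma=-M/(2m)$, which fixes a unique $\gamma\in(\pi/2,\pi)$, non-equilateral unless $M=m$. Feeding this back into the $\alpha$-free relation then pins down the circle: after simplification I anticipate $\kappa\rho^2/4=m/(3m-M)$, equivalently, writing $r$ for the normalized distance of the circle from the equatorial plane (so that $\Omega=\kappa^{-1}(1-r^2)$ and $\omega=-\kappa^{-1/2}(1\pm r)$), the formula $r^2=\frac{(2m-M)(m+M)}{(3m-M)(2m+M)}$. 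This produces the two circles symmetric about the equator, and the relation for $\alpha^2$ gives a real nonzero angular velocity depending on $\kappa,m,M$, completing the existence statement.

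Finally, for parts (i) and (ii) I would read off the bifurcation picture of Figure \ref{zones} by regarding $r$ as the parameter and counting the admissible triangles rotating on a circle at that distance. Clearing denominators in the $r$-formula produces a quadratic in the mass ratio $t=M/m$ (equivalently in $\cos\gamma$), namely $(1-r^2)t^2-(1-r^2)t-(2-6r^2)=0$, whose discriminant factors as $(1-r^2)(9-25r^2)$. I expect the sign of this discriminant to control everything: admissible shapes exist precisely for $r\le 3/5$, the two branches meeting in a fold at $r=3/5$, which accounts for the isolated uniqueness point $\{3/5\}$; the value $r=\sqrt{3}/3$ corresponds to the equilateral case $t=1$ and separates the branch $t\in(1,2)$, which yields $r\in(0,\sqrt{3}/3)$ with a single admissible shape, from the branch $t\in(0,1)$, which yields $r\in(\sqrt{3}/3,3/5)$ with two distinct admissible shapes. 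The main obstacle will be this last step: carrying out the root count rigorously, controlling the monotonicity of $r$ along each branch, correctly treating the degenerate boundary cases (the equilateral limit at $\sqrt{3}/3$, where the shape merges with the Lagrangian family, and the double root at $3/5$), and checking that the positivity and non-degeneracy constraints $\Omega>0$, $\kappa\rho^2/4<1$, and $\alpha^2>0$ hold throughout the admissible window.
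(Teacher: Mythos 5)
Your proposal is correct and follows essentially the same route as the paper: the same symmetric ansatz on a circle parallel to the equator, the same two balance relations extracted from the $\ddot\varphi$ and $\ddot\omega$ equations, the same key identity $\cos a=-M/(2m)$ forcing $M<2m$, and the same final polynomial relation between the shape and the latitude of the circle. Your explicit endgame --- the formula $r^2=\frac{(2m-M)(m+M)}{(3m-M)(2m+M)}$ and the quadratic $(1-r^2)t^2-(1-r^2)t-(2-6r^2)=0$ in $t=M/m$ with discriminant $(1-r^2)(9-25r^2)$ --- is just the paper's cubic $\gamma=F(s)$ with the common factor $2s+1$ cancelled and solved for the mass ratio rather than analyzed graphically, and I checked that it reproduces the paper's zones $(0,\sqrt{3}/3)$, $(\sqrt{3}/3,3/5)$, and $\{3/5\}$ correctly.
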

\begin{figure}[htbp]\label{zones} 
   \centering
   \includegraphics[width=2in]{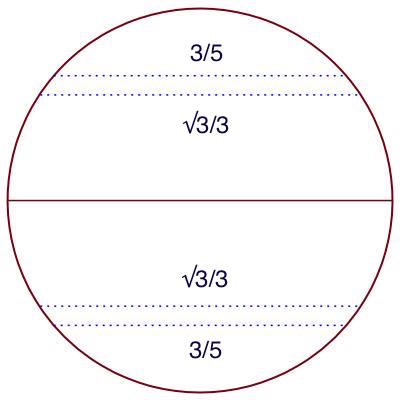}
   \caption{\small 
The zones in which one or two isosceles relative equilibria show up. The former orbits occur on the parallels marked by $3/5$ and in the large bands between the equator and the parallels marked by $\sqrt{3}/3$, whereas the latter show up in the two narrow bands, each between the parallel lines marked by $\sqrt{3}/3$ and $3/5$. The represented numbers, multiplied by $\kappa^{-1/2}$, which is the length of the sphere's radius, give the distances of the dotted lines from the plane of the equator.}
\end{figure}
\begin{proof}
We will start by seeking relative equilibria on non-great circles parallel with the equator. For this, we take
\begin{equation}\label{omega_123}
\omega_i=-u\  ({\rm constant}), \  i=1,2,3, \ \ 0<u<2\kappa^{-1/2}, \ u\ne \kappa^{-1/2}.
\end{equation}
Then we have
$$
\Omega_i=u(2\kappa^{-1/2}-u),\ \ \dot\omega_i=\dot\Omega_i=0, \ \ i=1,2,3,
$$
and assuming that the $\varphi$-angular positions are
given by the expressions in \eqref{vars}, we obtain that
$$
\rho_{ij}^2=2u(2\kappa^{-1/2}-u)[1-\cos(a_j-a_i)],\ \ i,j=1,2,3,\ i\ne j.
$$
The equations corresponding to $\ddot\omega_i, \ i=1,2,3,$ in system \eqref{equatorial} reduce to the algebraic equations
$$
\frac{\kappa(\kappa^{-1/2}-u)}{2}\Bigg[\sum_{j=1,j\ne i}\frac{m_j}{\rho_{ij}\big(1-\frac{\kappa\rho_{ij}^2}{4}\big)^{3/2}}  -2\alpha^2u(2\kappa^{-1/2}-u)\Bigg]=0, \ \ i=1,2,3.
$$
Since $\kappa>0$, we are not on the equator, i.e.\ $u\ne \kappa^{-1/2}$, and $\alpha^2$ must be the same in all the above three equations, this system leads to the conclusion that 
\begin{equation}\label{3eq-omega}
\frac{m_2}{A_{21}^{1/2}B_{21}^{3/2}}+\frac{m_3}{A_{31}^{1/2}B_{31}^{3/2}}=\frac{m_1}{A_{12}^{1/2}B_{12}^{3/2}}+\frac{m_3}{A_{32}^{1/2}B_{32}^{3/2}}=\frac{m_1}{A_{13}^{1/2}B_{13}^{3/2}}+\frac{m_2}{A_{23}^{1/2}B_{23}^{3/2}},
\end{equation}
where
$$
A_{ij}=1-\cos(a_i-a_j),\ \ B_{ij}=2-\gamma A_{ij},\ \ \gamma= \kappa u(2\kappa^{-1/2}-u).
$$
Notice that $A_{ij}=A_{ji}$ and $B_{ij}=B_{ji}, \ i,j=1,2,3, \ i\ne j$.

To evaluate the range in which $\gamma$ lies, we denote
$$
u=\lambda\kappa^{-1/2}, \ \ {\rm with}\ \  0<\lambda<2, \ \lambda\ne 1,
$$
in agreement with the conditions imposed on $u$ in \eqref{omega_123}. Then
$$
\gamma=\lambda(2-\lambda),
$$
which implies that, for the range of $\lambda$ specified above, we have
$$
0<\gamma<1.
$$

The equations corresponding to $\ddot\varphi_i,\ i=1,2,3,$ in system \eqref{equatorial} take the form
$$
\sum_{j=1,j\ne i}^3\frac{m_j\sin(a_j-a_i)}{\rho_{ij}^3\big(1-\frac{\kappa\rho_{ij}^2}{4}\big)^{3/2}}=0, \ \ i=1,2,3,
$$
which are equivalent to
\begin{equation}\label{3eq-varphi}
\begin{cases}
\cfrac{m_2\sin(a_2-a_1)}{A_{21}^{3/2}B_{21}^{3/2}}+\cfrac{m_3\sin(a_3-a_1)}{A_{31}^{3/2}B_{31}^{3/2}}=0\cr
\cfrac{m_1\sin(a_1-a_2)}{A_{12}^{3/2}B_{12}^{3/2}}+\cfrac{m_3\sin(a_3-a_2)}{A_{32}^{3/2}B_{32}^{3/2}}=0\cr
\cfrac{m_1\sin(a_1-a_3)}{A_{13}^{3/2}B_{13}^{3/2}}+\cfrac{m_2\sin(a_2-a_3)}{A_{23}^{3/2}B_{23}^{3/2}}=0.
\end{cases}
\end{equation}


It was shown in \cite{Diacu11} that, for any given $u$ as above, there exist two values for the angular velocity $\alpha$, one corresponding to each sense of rotation, in the case when the triangle is equilateral and $m_1=m_2=m_3$. It is easy to see that we can recover these relative equilibria from equations \eqref{3eq-omega} and \eqref{3eq-varphi}. We will therefore look now for acute isosceles relative equilibria. Triangles with an obtuse angle cannot form relative equilibria since it is impossible to have relative equilibria if, at every moment in time, there is a plane containing the rotation axis such that all the bodies are on one side of it (see \cite{Diacu11} for a proof of this fact). For this purpose, we can assume without loss of generality that
$$
a_1=0,\ \ a_2=:a,\ \ a_3=:2\pi-a, \ \ {\rm with}\ \ a\in(\pi/2,\pi),
$$ 
such that the isosceles triangle is acute. Then $A_{12}=A_{13}$ and $B_{12}=B_{13}$, so from the last equation in
\eqref{3eq-omega} we can draw the conclusion that $m_2=m_3$. Let us further use the notation
$$
M:= m_1,\ \ m:=m_2=m_3.
$$
Then equations \eqref{3eq-omega}-\eqref{3eq-varphi}
reduce to
\begin{equation}\label{3eq-omega2}
\frac{2m-M}{A^{1/2}B^{3/2}}=
\frac{m}{C^{1/2}D^{3/2}},
\end{equation}
\begin{equation}\label{3eq-varphi2}
\frac{M}{A^{3/2}B^{3/2}}=-\frac{2m\cos a}{C^{3/2}D^{3/2}},
\end{equation}
respectively, where
$$
A=1-\cos a,\ \ B=2-\gamma(1-\cos a),\ \ C=1-\cos 2a,\ \ D=2-\gamma(1-\cos 2a).
$$
Expressing $A^{1/2}B^{3/2}$ in \eqref{3eq-omega2} in terms of $C^{1/2}D^{3/2}$ and substituting in \eqref{3eq-varphi2},
we are led to the conclusion that 
$$
\cos a=-\frac{M}{2m}.
$$
Since $a\in(\pi/2,\pi)$, it means that
$$
-1<-\frac{M}{2m}<0,
$$
conditions that are satisfied for positive masses whenever
$$
M<2m.
$$
Notice that for $m=M$, equations \eqref{3eq-omega2}-\eqref{3eq-varphi2} are identically satisfied, so we recover the Lagrangian equilateral relative equilibria.

Substituting $M=-2m\cos a$ in equations \eqref{3eq-omega2} and \eqref{3eq-varphi2}, we are led to the same relationship, namely
$$
2-\gamma(1-\cos a)=4(1+\cos a)[1-\gamma(1-\cos^2 a)].
$$
Using the notation $s:=\cos a$, the above equation takes the form
$$
4\gamma s^3+4\gamma s^2+(4-5\gamma) s-3\gamma +2=0,
$$
which can be written as 
$$
\gamma=F(s), \ \ {\rm with}\ \ F(s)=-\frac{4s+2}{4s^3+4s^2-5s-3}.
$$
It is easy to see that for $s\in(-1,-1/2)\cup(-1/2,0)$, $F$ takes values in the interval $[16/25, 1)$, with its single minimum, $16/25$, occurring for $s=-1/4$ (see Figure \ref{graph1}). 
For $s=-1/2$, a case that corresponds to $a=2\pi/3$, i.e. to Lagrangian equilateral triangles, we have that $F(-1/2)=0/0$. This means $F$ could take any value at $s=-1/2$. The physical interpretation of this fact is that Lagrangian relative equilibria can occur on any parallel circle on the sphere.

\begin{figure}[htbp]\label{graph1} 
   \centering
   \includegraphics[width=1.5in]{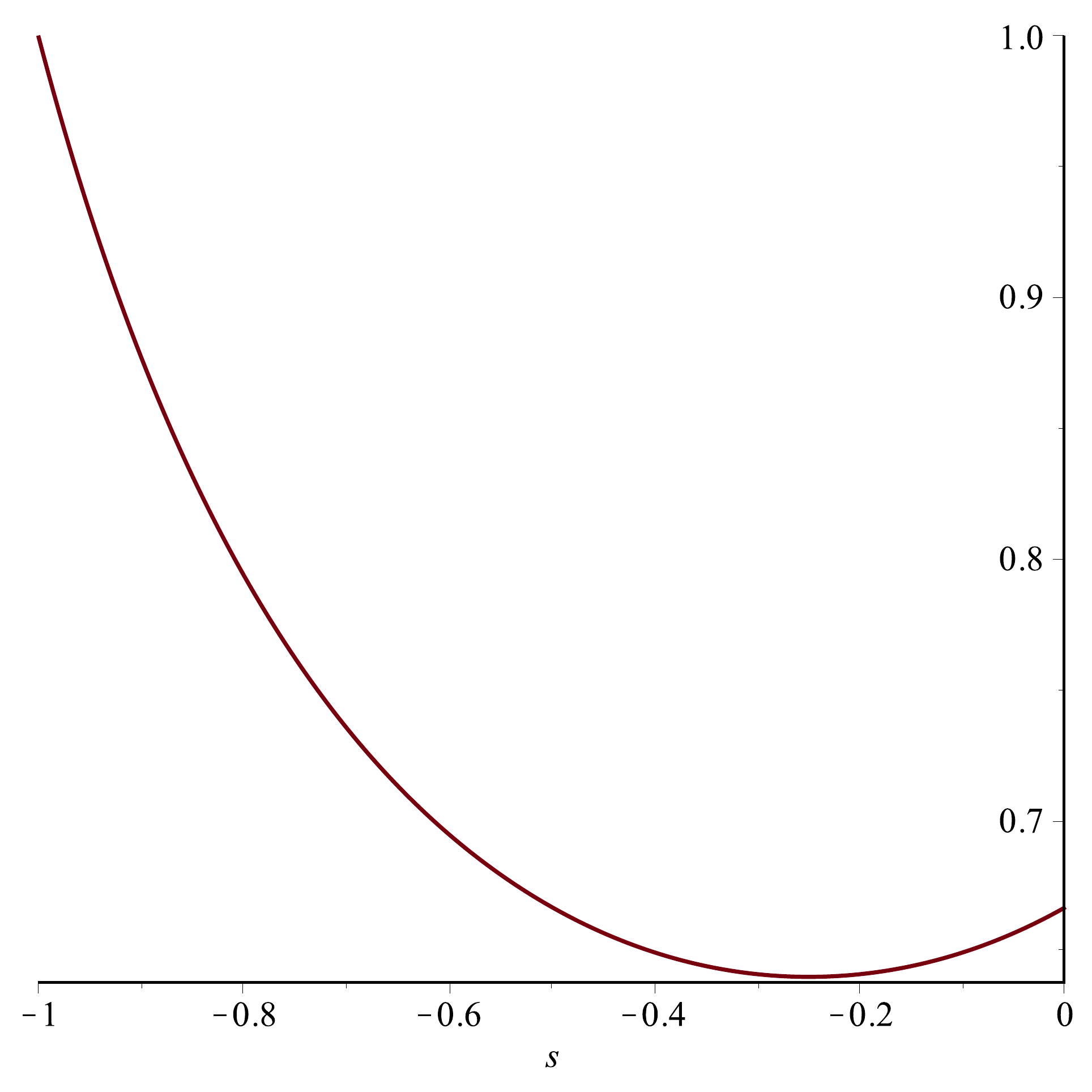}
   \caption{\small The graph of
   $F(s)=-\frac{4s+2}{4s^3+4s^2-5s-3}$ in the interval $(-1,0)$.}
   \label{graph1}
\end{figure}

For $s\in(-1,-1/2)\cup(-1/2,0)$, we necessarily have $\gamma\in[16/25,1)$. But as $\lambda(2-\lambda)=\gamma$, it means
that $\lambda\in(2/5,8/5)$. Therefore isosceles non-equilateral relative equilibria can exists only on non-geodesic circles parallel with the equator in a region bounded by two planes: one at distance $\frac{3}{5}\kappa^{-1/2}$ above the plane of the equator and the other at the same distance below the plane of the equator. Since $F(0)=2/3$ (see Figure \ref{graph1}), for $\gamma\in(16/25,2/3)$ there are two values of $s$ that satisfy the equation $\gamma=F(s)$. They correspond to values of $\lambda$ that satisfy the inequalities
$
16/25<\lambda(2-\lambda)<2/3,
$
which translate into $\lambda\in (2/5,1-\sqrt{3}/3)\cup(1+\sqrt{3}/3,8/5)$. This means that there are two open regions, symmetrically placed relative to the equator, in which for every admissible $m$ and $M$ we find two distinct isosceles triangles that form relative equilibria. These regions are distanced at $\frac{3}{5}\kappa^{-1/2}$ (upper bound) and
$\frac{\sqrt{3}}{3}\kappa^{-1/2}$ (lower bound) from the plane of the equator. When $\gamma=2/3$, there are two values of $s$ that correspond to it (see Figure \ref{graph1}), one of which is 0, and implies that $\cos a =0$, a case that leads to a right isosceles triangle (which cannot be a relative equilibrium since the triangle is not acute). When $\gamma=1$, only the value $s=-1$ corresponds to it, which means that $\cos a=-1$, so we have a degenerate isosceles triangle with a collision-antipodal singularity (collision between $m_2$ and $m_3$ and antipodal configuration between $m_1$ and the pair $m_2m_3$). Therefore this case leads to no new solutions, a remark that completes the proof.
\end{proof}

\begin{remark}
In a previous paper, \cite{Diacu02}, we stated a result according to which there are no relative equilibria given by scalene triangles on parallel circles outside the equator of $\mathbb S_\kappa^2$. But in the mean time we found an error in the proof (we wrongly assumed that a certain relation can generate two similar relations by circular permutations). The result in \cite{Diacu02} remains true as stated (and we will provide a correct demonstration in a future paper), in the sense of purely scalene triangular relative equilibria, i.e.\ with the exclusion of the isosceles relative equilibria whose existence is proved above.
\end{remark}

\section{Elliptic relative equilibria in $\mathbb H_\kappa^2$}

In this section we study the existence of relative equilibria
in $\mathbb H_\kappa^2$. It turns out that the set of solutions of this kind is not as rich as in $\mathbb S_\kappa^2$, a first manifestation of the phenomenon we mentioned earlier. More precisely we will prove the following result.  
\begin{theorem}
There are no isosceles relative equilibria parallel with the $xy$-plane in $\mathbb H_\kappa^2$, except for the Lagrangian (equilateral) solutions of equal masses, which occur on any circle parallel with the $xy$-plane.
\end{theorem}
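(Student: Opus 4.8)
The plan is to mirror, in the hyperbolic setting, the reduction carried out in the proof of Theorem~\ref{isos-re}, and then to exploit the single structural difference between $\mathbb S_\kappa^2$ and $\mathbb H_\kappa^2$, which is the sign of the quantity $\gamma$. First I would introduce on $\mathbb H_\kappa^2$ the analogue of the $(\varphi,\omega)$ coordinates of Section 5, where $\omega=z$ is the height along the $z$-axis and $\varphi$ the angle in the $xy$-plane, and rewrite the equations of motion \eqref{new} in a form parallel to \eqref{equatorial}. From the constraint \eqref{constraint-1} with $\kappa<0$ and $\sigma=-1$ one gets $\Omega_i:=x_i^2+y_i^2=\omega_i^2+2|\kappa|^{-1/2}\omega_i$, which is strictly positive for $\omega_i>0$.

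To look for relative equilibria parallel with the $xy$-plane I would take $\omega_i=w$ constant with $w>0$, and $\varphi_i=\alpha t+a_i$, exactly as in \eqref{omega_123}--\eqref{vars}. Then $\Omega_i=\Omega$ is common to all bodies, $\dot\omega_i=\dot\Omega_i=0$, and $r_{ij}^2=2\Omega[1-\cos(a_i-a_j)]$, since the Minkowski contribution $\sigma(z_i-z_j)^2$ vanishes at equal heights. Writing $\gamma:=\kappa\Omega$, $A_{ij}=1-\cos(a_i-a_j)$, and $B_{ij}=2-\gamma A_{ij}$, the reduced $\ddot\omega$- and $\ddot\varphi$-equations should take precisely the algebraic forms \eqref{3eq-omega} and \eqref{3eq-varphi}. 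The point that needs care is the reduction of the $\ddot z$-equation: because $\mathbb H_\kappa^2$ carries the Lorentz product and $\sigma=-1$, the signs in the gravitational and constraint terms differ from the spherical case. At equal heights the numerator of that equation collapses to $\tfrac12 r_{ij}^2(\kappa w+\sigma|\kappa|^{1/2})$ and the velocity term to $-\alpha^2\Omega(\kappa w+\sigma|\kappa|^{1/2})$, so I would divide out the common factor $\kappa w+\sigma|\kappa|^{1/2}=-|\kappa|^{1/2}(|\kappa|^{1/2}w+1)$, which never vanishes for $w\ge 0$, reflecting the absence of an equator on $\mathbb H_\kappa^2$. This leaves the three balance relations equal to one another, that is, \eqref{3eq-omega}.

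Imposing the acute isosceles configuration $a_1=0$, $a_2=a$, $a_3=2\pi-a$ with $a\in(\pi/2,\pi)$ forces $m_2=m_3=:m$ and $m_1=:M$, and collapses the system to the two equations \eqref{3eq-omega2}--\eqref{3eq-varphi2}, with $A,B,C,D$ as defined there. The elimination used in the proof of Theorem~\ref{isos-re} is purely algebraic and insensitive to the sign of $\kappa$: it again yields $\cos a=-M/(2m)$ and, on setting $s:=\cos a$, the same cubic $4\gamma s^3+4\gamma s^2+(4-5\gamma)s-3\gamma+2=0$, hence $\gamma=F(s)$ with $F$ exactly as in Theorem~\ref{isos-re}. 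Obtuse triangles are ruled out by the cited one-sided-plane obstruction, so it suffices to examine $s\in(-1,-1/2)\cup(-1/2,0)$.

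The punchline is then the sign of $\gamma$. On $\mathbb H_\kappa^2$ we have $\kappa<0$ and $\Omega>0$, so $\gamma=\kappa\Omega<0$, in sharp contrast with the range $\gamma\in(0,1)$ forced on the sphere. But it was established in the proof of Theorem~\ref{isos-re} that $F(s)\in[16/25,1)$ for every $s\in(-1,-1/2)\cup(-1/2,0)$, so $F$ is strictly positive on the entire acute isosceles range. Consequently $\gamma=F(s)$ has no admissible solution, and no isosceles non-equilateral relative equilibria parallel with the $xy$-plane can exist. The sole exception is $s=-1/2$, equivalently $M=m$, where \eqref{3eq-omega2}--\eqref{3eq-varphi2} are identically satisfied for every value of $\gamma$; this is the Lagrangian equilateral solution of equal masses, which therefore occurs on every circle parallel with the $xy$-plane, as claimed. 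I expect the only real obstacle to be the sign bookkeeping in deriving the hyperbolic $\ddot\omega$-equation and confirming that it delivers the identical function $F$; once that is in place, the conclusion is immediate from the already-proved positivity of $F$ together with $\gamma<0$.
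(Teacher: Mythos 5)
Your proposal is correct and follows essentially the same route as the paper: rewrite the equations in $(\varphi,\omega)$-coordinates on $\mathbb H_\kappa^2$, reduce to the same algebraic system and the same function $F$, and observe that the hyperbolic analogue of $\gamma$ (the paper's $\delta=\kappa\Psi$) is negative while $F$ is positive on the acute isosceles range, leaving only the equal-mass equilateral $0/0$ case. Your sign bookkeeping for the non-vanishing factor $\kappa w+\sigma|\kappa|^{1/2}$ matches the paper's equation for $\ddot\omega_i$, so no gap remains.
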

\begin{proof}
We start by rewriting system \eqref{new} in a convenient way, which will allow us to use part of the proof of Theorem 3 for our current purposes. For this, let us denote 
$$
\Psi_i=|\kappa|^{-1/2}\omega_i(|\kappa|^{1/2}\omega_i+2),\ \ \omega_i\in[0,\infty), \ \ \kappa<0,
$$
and consider the change of coordinates
$$
x_i=\Psi_i^{1/2}\cos\varphi_i,\ \ y_i=\Psi_i^{1/2}\sin\varphi_i, \ \ i=1,2,3.
$$
Then some straightforward computations show that the equations of motion \eqref{new} take the form
\begin{equation}\label{equatorial-H}
\begin{cases}
\ddot\varphi_i=\Psi_i^{-1/2}\sum_{\stackrel{j=1}{j\ne i}}^3
\frac{m_j\Psi_j^{1/2}\sin(\varphi_j-\varphi_i)}{\rho_{ij}^3\big(1-\frac{\kappa\rho_{ij}^2}{4}\big)^{3/2}}-\frac{\dot\varphi_i\dot\Psi_i}{\Psi_i}  \cr
\ddot\omega_i=\sum_{\stackrel{j=1}{j\ne i}}^3\frac{m_j\big[\omega_j-\omega_i+\frac{\kappa\rho_{ij}^2}{2}(\omega_i+|\kappa|^{-1/2})\big]}{\rho_{ij}^3\big(1-\frac{\kappa\rho_{ij}^2}{4}\big)^{3/2}}-(\kappa\omega_i-|\kappa|^\frac{1}{2})\big(\frac{\dot{\Psi}_i^2}{4\Psi_i}+\dot{\varphi}_i^2\Psi_i-\dot{\omega}_i^2\big),
\end{cases}
\end{equation}
$i=1,2,3$, where
$$
\dot\Psi_i=2|\kappa|^{-1/2}\dot\omega_i(|\kappa|^{1/2}\omega_i+1), \ \ \omega_i\in(0,\infty),\ \ i=1,2,3,
$$
$$
\rho_{ij}^2=\Psi_i+\Psi_j-2\Psi_i^{1/2}\Psi_j^{1/2}\cos(\varphi_i-\varphi_j)-(\omega_i-\omega_j)^2,\ i, j=1,2,3, \ i\ne j. 
$$

Since we assume that the motion takes place in a plane
parallel with the $xy$-plane on $\mathbb H_\kappa^2$, we can take 
$$
\omega_i=v>0,  \ \ i=1,2,3.
$$
The rest of the proof is identical with that for Theorem \ref{isos-re}, with one exception: if we denote 
$$
\mu=|\kappa|^{1/2}v>0,\ \ \delta=-|\kappa|v(2|\kappa|^{-1/2}+v),
$$
then $\gamma$ in the proof of Theorem 3 must be replaced by $\delta$ and the discussion of the inequalities related to the quadratic equation $\lambda(2-\lambda)=\gamma$ must be replaced by that
of the quadratic equation in $\mu$,
$$
-\mu(\mu+2)=\delta.
$$
But then, in the equation $\delta=F(s)$, the only solutions
occur when $F(s)=0/0$, since otherwise $\delta$ is negative
and $F(s)$ positive. Like in the case of the spheres $\mathbb S_\kappa^2$, those solutions correspond to the
Lagrangian (equilateral) triangles, a remark that completes the proof. 
\end{proof}

\section{Equivalent equations of motion}

In this section we will obtain another form of the equations
of motion that will be suitable for the study of 
Lagrangian relative equilibria. Let us notice first that system \eqref{new} is not analytic in $\kappa$ at $\kappa=0$ due to the occurrence of the terms $|\kappa|^{1/2}$ in the last equation. In \cite{Diacu77}, this inconvenience was solved by applying the substitution  
$
\delta=\sigma|\kappa|^{1/2},
$
and further using the parameter $\delta$ instead of $\kappa$. Here we will proceed differently. The idea, which Carles Sim\'o kindly suggested, is to express the variables $\omega_1, \omega_2, \omega_3$ in terms of the other variables in a suitable way with the help of the constraints, such that system \eqref{new} becomes analytic for all values of $\kappa\in\mathbb R$. For this, we will write the constraints \eqref{constraint-1} as 
$$
\kappa(x_i^2+y_i^2+z_i^2)+(|\kappa|^{1/2}\omega_i+1)^2=1,
\ \ i=1,2,3,
$$
which we solve for $\omega_i$ explicitly and obtain
\begin{equation}\label{omega}
\omega_i=|\kappa|^{-1/2}\Big[\sqrt{1-\kappa(x_i^2+y_i^2+z_i^2)}-1\Big], \ \ i=1,2,3.
\end{equation}
We can now completely eliminate the three equations involving $\omega_1,\omega_2,\omega_3$, but these variables still occur in the terms $r_{ij}^2$, which show up in the other equations. Actually these variables appear in the particular form $\sigma(\omega_i-\omega_j)^2,$
which using \eqref{omega} can be written as
\begin{equation}\label{omega-2}
\sigma(\omega_i-\omega_j)^2=
\frac{\kappa(x_i^2+y_i^2+z_i^2-x_j^2-y_j^2-z_j^2)^2}{\Big[\sqrt{1-\kappa(x_i^2+y_i^2+z_i^2)}+\sqrt{1-\kappa(x_j^2+y_j^2+z_j^2)}\ \Big]^2}.
\end{equation}
For $\kappa>0$ and sufficiently small, the expressions involving square roots always exist, an assumption we will further impose in the rest of the paper. So system \eqref{new} is now reduced to the first $18$ equations
without any constraints and is analytic in $\kappa$ for all small values of this parameter. In fact, as we mentioned earlier, from the physical point of view these values are the only interesting ones because should our universe be non-flat, it would for sure have a curvature that is close to zero, whether positive or negative.

 Using equations \eqref{new}, as well as formulas \eqref{omega} and \eqref{omega-2} with $z_1=z_2=z_3=0$, some differentiation leads us to the system 
\begin{equation}\label{3-body-problem}
\begin{cases}
\ddot x_i=\sum_{j=1, j\ne i}^3\cfrac{m_j\Big[x_j-\Big(1-\frac{\kappa \rho_{ij}^2}{2}\Big)x_i\Big]}{\rho_{ij}^3\Big(1-\frac{\kappa \rho_{ij}^2}{4}\Big)^{3/2}}-\kappa (\dot{x}_i^2+\dot{y}_i^2+\kappa B_i)x_i\cr
\ddot y_i=\sum_{j=1, j\ne i}^3\cfrac{m_j\Big[y_j-\Big(1-\frac{\kappa \rho_{ij}^2}{2}\Big)y_i\Big]}{\rho_{ij}^3\Big(1-\frac{\kappa \rho_{ij}^2}{4}\Big)^{3/2}}-\kappa (\dot{x}_i^2+\dot{y}_i^2+\kappa B_i)y_i,\cr
\end{cases}
\end{equation}
$i=1,2,3$, where, for $ i,j\in\{1,2,3\},\ i\ne j,$
\begin{equation}\label{rho}
\rho_{ij}^2=(x_i-x_j)^2+(y_i-y_j)^2+
\frac{\kappa(A_i-A_j)^2}{\big(\sqrt{1-\kappa A_i}+\sqrt{1-\kappa A_j}\ \! \big)^2},
\end{equation}
\begin{equation}\label{Ai}
A_i=x_i^2+y_i^2,\ \ i=1,2,3,
\end{equation}
\begin{equation}\label{Bi}
B_i=\frac{(x_i\dot{x}_i+y_i\dot{y}_i)^2}{1-\kappa A_i},\ \ i=1,2,3.
\end{equation}

This is the system we will further study here. Notice that for $\kappa=0$ we recover the classical Newtonian equations of the planar 3-body problem,
\begin{equation}\label{3-body-problem-Newtonian}
\begin{cases}
\ddot x_i=\sum_{j=1, j\ne i}^3\frac{m_j(x_j-x_i)}{\rho_{ij}^3}\cr
\ddot y_i=\sum_{j=1, j\ne i}^3\frac{m_j(y_j-y_i)}{\rho_{ij}^3}, \ \ i=1,2,3. \cr
\end{cases}
\end{equation}

For all $\kappa\in\mathbb R$, system \eqref{3-body-problem} possesses the integral of energy,
\begin{equation}
T_\kappa({\bf q},\dot{\bf q})-U_\kappa({\bf q})=h,
\end{equation}
where $h$ is an integration constant, $T_\kappa$ is the kinetic energy,
\begin{equation}
T_\kappa({\bf q},\dot{\bf q})=\frac{1}{2}\sum_{i=1}^3m_i(\dot{x}_i^2+\dot{y}_i^2+\kappa B_i),
\end{equation}
and $U_\kappa$ is the force function,
\begin{equation}
U_\kappa({\bf q})=\sum_{1\le i<j\le 3}\frac{m_im_j\Big(1-\frac{\kappa \rho_{ij}^2}{2}\Big)}{\rho_{ij}\Big(1-\frac{\kappa\rho_{ij}^2}{4}\Big)^{1/2}},
\end{equation}
with
$$
{\bf q}=({\bf q}_1,{\bf q}_2, {\bf q}_3), \ {\bf q}_i=(x_i, y_i),\ i=1,2,3.
$$
Notice that for $\kappa=0$ we recover the well-known expression of the kinetic energy,
$$
T({\bf q}, \dot{\bf q})=\frac{1}{2}\sum_{i=1}^3m_i(\dot{x}_i^2+\dot{y}_i^2),
$$
and the force function,
$$
U({\bf q})=\sum_{i=1}^3\frac{m_im_j}{\rho_{ij}}.
$$

To write the integrals of the total angular momentum, notice first that from \eqref{omega} we have that
$$
\dot\omega_i=-\sigma|\kappa|^{1/2}B_i^{1/2},\ \ i=1,2,3,
$$
and by multiplying with the conjugate in \eqref{omega}, we can write that
$$
\omega_i=-\frac{\sigma|\kappa|^{1/2}A_i}{1+\sqrt{1-\kappa A_i}},\ \ i=1,2,3.
$$
Using these expressions, we can now write the three integrals of the total angular momentum,
\begin{equation}\label{1-angmom}
\sigma|\kappa|^{1/2}\sum_{i=1}^3m_i\bigg(B_i^{1/2}x_i-\frac{A_i\dot{x}_i}{1+\sqrt{1-\kappa A_i}}\bigg)-
|\kappa|^{-1/2}\sum_{i=1}^3m_i\dot{x}_i=c_1,
\end{equation}
\begin{equation}\label{2-angmom}
\sigma|\kappa|^{1/2}\sum_{i=1}^3m_i\bigg(B_i^{1/2}{y}_i-\frac{A_i\dot{y}_i}{1+\sqrt{1-\kappa A_i}}\bigg)-|\kappa|^{-1/2}\sum_{i=1}^3m_i\dot{y}_i=c_2,
\end{equation}
\begin{equation}\label{3-angmom}
\sum_{i=1}^3\sigma m_i(y_i\dot{x}_i-x_i\dot{y}_i)=c_3,
\end{equation}
where $c_1, c_2, c_3$ are integration constants. Notice that if we multiply equations \eqref{1-angmom} and \eqref{2-angmom} by $|\kappa|^{1/2}$, then for $\kappa=0$ these two integrals of the angular momentum become the two integrals of the linear momentum, so we are left only with one integral of the total angular momentum, equation \eqref{3-angmom}, as expected to happen in the planar Euclidean case. 

So for $\kappa=0$, we can write the integrals of the centre of mass and linear momentum as
\begin{equation}
\begin{cases}
\sum_{i=1}^3m_ix_i=0,\cr
\sum_{i=1}^3m_iy_i=0,\cr
\sum_{i=1}^3m_i\dot{x}_i=0,\cr
\sum_{i=1}^3m_i\dot{y}_i=0.\cr
\end{cases}
\end{equation}
No such integrals, however, occur for $\kappa\ne 0$, as shown in \cite{Diacu77}.

\section{The case of two negligible masses}

In this section we consider the simple case
when $m_1=:m>0$ and $m_2=m_3=0$, also known as a planetary problem. For the classical Newtonian equations, the
Lagrangian relative equilibria have a particular form under such circumstances: $m_1$ is at rest at the origin of the coordinate system, while $m_2$ and $m_3$ move along the same circle, such that the three particles form an equilateral triangle for all time. We will further show that similar orbits exist for $\kappa\ne 0$. More precisely, we will prove the
following result.

\begin{theorem}
In the case of two negligible masses, there occur no bifurcations of the Lagrangian equilateral relative equilibria
when passing from $\mathbb S_\kappa^2$ to $\mathbb R^2$
to $\mathbb H_\kappa^2$, as $\kappa$ goes from $+\infty$ to $-\infty$.
\end{theorem}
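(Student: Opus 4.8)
The plan is to exploit the analytic reformulation \eqref{3-body-problem} of the equations of motion, the very purpose of which (Section 7) is to be analytic in $\kappa$ across $\kappa=0$, so that persistence of a relative equilibrium can be read off from a single analytic family rather than by matching three separate manifolds. Setting $m_2=m_3=0$, the equation for the massive body $m_1=:m$ decouples to $\ddot{\bf q}_1=-\kappa(\dot x_1^2+\dot y_1^2+\kappa B_1){\bf q}_1$; in particular ${\bf q}_1\equiv(0,0)$, the common origin of all the manifolds $\mathbb{M}_\kappa^2$, is an equilibrium for every $\kappa$, and I place $m_1$ there. The two negligible bodies then move as test particles, and I seek a rigidly rotating configuration via the ansatz $x_i=A^{1/2}\cos(\alpha t+a_i)$, $y_i=A^{1/2}\sin(\alpha t+a_i)$ for $i=2,3$, with common projected radius $A^{1/2}$ (so $A_2=A_3=:A$ in the notation of \eqref{Ai}), common angular velocity $\alpha$, and symmetric phases $a_2=-a_3=:a$.

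With $x_1=y_1=0$ the force term in \eqref{3-body-problem} acting on each test body is purely central (proportional to $-{\bf q}_i$), so each reduces to a central-force problem whose circular orbits are relative equilibria; the equal-distance relation $\rho_{12}=\rho_{13}$, together with monotonicity of $A\mapsto\rho_{1\cdot}^2$, forces the two bodies onto the same circle, where they automatically co-rotate with a common $\alpha$. The Lagrangian character is then the extra, purely geometric requirement $\rho_{12}=\rho_{23}$. Using \eqref{rho} with $A_1=0$ one has $\rho_{12}^2=A+\kappa A^2/(1+\sqrt{1-\kappa A})^2$ and $\rho_{23}^2=4A\sin^2 a$, and after the substitution $w:=\sqrt{1-\kappa A}$ the equilateral condition collapses to
\begin{equation*}
\sin^2 a=\frac{1}{2(1+w)}, \qquad w=\sqrt{1-\kappa A}.
\end{equation*}
This single analytic equation is strictly monotone in $w$, hence determines a unique $a\in(0,\pi/2)$ for each admissible $(\kappa,A)$: it gives the Euclidean value $\sin^2 a=1/4$ ($a=\pi/6$) exactly at $\kappa=0$, values in $(1/4,1/2)$ on $\mathbb{S}_\kappa^2$, and values in $(0,1/4)$ on $\mathbb{H}_\kappa^2$.

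The remaining $x$- (equivalently $y$-) equation for body $2$, after using $\ddot x_2=-\alpha^2x_2$ and $B_2=0$, fixes the angular velocity through $\alpha^2(1-\kappa A)=m\big(1-\tfrac{\kappa\rho_{12}^2}{2}\big)\big/\big[\rho_{12}^3(1-\tfrac{\kappa\rho_{12}^2}{4})^{3/2}\big]$. The computation simplifies markedly once the shape relation is used: $\rho_{12}^2=2A/(1+w)$ gives $1-\tfrac{\kappa\rho_{12}^2}{2}=w$, $1-\kappa A=w^2$, and $1-\tfrac{\kappa\rho_{12}^2}{4}=(1+w)/2$, so that $\alpha^2=m\big/\big[w\,\rho_{12}^3((1+w)/2)^{3/2}\big]$ is explicit, analytic, and strictly positive for every $w>0$. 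Thus along each parallel circle the Lagrangian configuration survives as a single smooth branch as $\kappa$ runs from $+\infty$ to $-\infty$, passing analytically through the Euclidean case at $\kappa=0$ because \eqref{3-body-problem} is analytic there; this is exactly the absence of bifurcation.

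I expect the main obstacle to be conceptual rather than computational: justifying that the planetary Lagrangian orbits are precisely those with $m_1$ at the origin, which requires disposing of the alternative equilibrium position of the massive body. Indeed, its decoupled equation also admits rigid rotation on the circle $A_1=\kappa^{-1}$ (the equator, when $\kappa>0$), and I would check that this position does not close up into an equilateral co-rotating triangle and hence spawns no competing branch. A secondary point is the behaviour at the edges of the range: $w\to 0$ (approach to the equator on $\mathbb{S}_\kappa^2$) drives $\alpha^2\to\infty$ and $w\to\infty$ (escape to infinity on $\mathbb{H}_\kappa^2$) drives $\alpha^2\to 0$, and I would argue these are limiting endpoints of the one-parameter family, not bifurcations, so that the single analytic branch is never lost as the curvature varies.
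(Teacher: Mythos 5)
Your proposal is correct and follows essentially the same route as the paper: place $m_1$ at the common origin, take the rigidly rotating circular ansatz for the two massless bodies, impose $\rho_{12}=\rho_{13}=\rho_{23}$ to get the shape relation (your $\sin^2 a=\tfrac{1}{2(1+w)}$ is exactly the paper's $\cos\theta=\sqrt{1-\kappa r^2}/(1+\sqrt{1-\kappa r^2})$), and read off a positive angular velocity (your $\alpha^2=m/(w r^3)$ agrees with the paper's formula \eqref{relationship}) that depends analytically on $\kappa$ through $\kappa=0$. The $w$-substitution and the closing remarks about the equatorial equilibrium of $m_1$ and the endpoint limits are tidy additions but do not change the argument.
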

\begin{proof}
Since $m_2=m_3=0$, these particles do not influence the motion of $m_1$. Therefore if we initially take $x_1(0)=y_1(0)=0$, the coordinates of $m_1$ remain the same all along the motion. Then the equations in \eqref{3-body-problem} corresponding to $x_1$ and $y_1$ are identically satisfied and system \eqref{3-body-problem}, which now describes only the motion of $m_2$ and $m_3$, takes the form
\begin{equation}\label{two-zero-masses}
\begin{cases}
\ddot{x}_2=\frac{m\big(\frac{\kappa\rho_{12}^2}{2}-1\big)x_2}{\rho_{12}^3\big(1-\frac{\kappa\rho_{12}^2}{4}\big)^{3/2}}
-\kappa(\dot{x}_2^2+\dot{y}_2^2+\kappa B_2)x_2\cr
\ddot{y}_2=\frac{m\big(\frac{\kappa\rho_{12}^2}{2}-1\big)y_2}{\rho_{12}^3\big(1-\frac{\kappa\rho_{12}^2}{4}\big)^{3/2}}
-\kappa(\dot{x}_2^2+\dot{y}_2^2+\kappa B_2)y_2\cr
\ddot{x}_3=\frac{m\big(\frac{\kappa\rho_{13}^2}{2}-1\big)x_3}{\rho_{13}^3\big(1-\frac{\kappa\rho_{13}^2}{4}\big)^{3/2}}
-\kappa(\dot{x}_3^2+\dot{y}_3^2+\kappa B_2)x_3\cr
\ddot{y}_3=\frac{m\big(\frac{\kappa\rho_{13}^2}{2}-1\big)y_3}{\rho_{13}^3\big(1-\frac{\kappa\rho_{13}^2}{4}\big)^{3/2}}
-\kappa(\dot{x}_3^2+\dot{y}_3^2+\kappa B_2)y_3,\cr
\end{cases}
\end{equation}
where
$$
\rho_{12}^2=x_2^2+y_2^2+\frac{\kappa(x_2^2+y_2^2)^2}{\big[1+\sqrt{1-\kappa(x_2^2+y_2^2)}\big]^2},
$$
$$
\rho_{13}^2=x_3^2+y_3^2+\frac{\kappa(x_3^2+y_3^2)^2}{\big[1+\sqrt{1-\kappa(x_3^2+y_3^2)}\big]^2},
$$
$$
B_2=\frac{(x_2\dot{x}_2+y_2\dot{y}_2)^2}{1-\kappa(x_2^2+y_2^2)},\ \ \  B_3=\frac{(x_3\dot{x}_3+y_3\dot{y}_3)^2}{1-\kappa(x_3^2+y_3^2)}.
$$

We can now show that system \eqref{two-zero-masses} has a solution of the form
\begin{equation}\label{solution-2-0-masses}
x_2=r\cos\alpha t,\ \ y_2=r\sin\alpha t,\ \ x_3=r\cos(\alpha t+\theta),\ \ y_3=r\sin(\alpha t+\theta),
\end{equation}
where $\theta\in(0,\pi)$. This means that $m_2$ and $m_3$ move along a circle of radius $r$ on the 2-sphere or the hyperbolic 2-sphere of curvature $\kappa$, such that they form an equilateral triangle with $m_1$ at all times. In the 3-dimensional flat ambient space, the particles form an equilateral triangle that  is not parallel with the $xy$-plane. The projection of the $2\pi/3$-angle between the sides $m_1m_2$ and $m_1m_3$ on the $xy$-plane is $\theta$,
an angle that depends on $r$. The projection of the other two angles of the equilateral triangle onto the $xy$-plane is also different from $2\pi/3$.

To show that \eqref{solution-2-0-masses} is a solution of system \eqref{two-zero-masses}, notice first that
$$
\rho^2:=\rho_{12}^2=\rho_{13}^2=\frac{2r^2}{1+\sqrt{1-\kappa r^2}},\ \ \ B_2=B_3=0.
$$
But the point masses $m_1, m_2, m_3$ form an equilateral triangle only if the condition 
$$
\rho^2=\rho_{23}^2=2(1-\cos\theta)r^2
$$ 
is also satisfied. This condition leads to the connection between $\theta$ and $r$, namely
$$
\cos\theta=\frac{\sqrt{1-\kappa r^2}}{1+\sqrt{1-\kappa r^2}}.
$$
Then some straightforward computations prove that all four equations in system \eqref{two-zero-masses} lead to the same relationship,
\begin{equation}\label{relationship}
\alpha^2=\frac{m(1-\kappa r^2+\sqrt{1-\kappa r^2})}{r^3(1-\kappa r^2)(1+\sqrt{1-\kappa r^2})},
\end{equation}
which shows how the angular velocity $\alpha$ of the particles $m_2$ and $m_3$ depends on the constants $\kappa, m$, and $r$. Since for $\kappa, m$, and $r$ fixed there are always two values of $\alpha$ that satisfy the above relationship, one corresponding to each direction of rotation, it means that Lagrangian relative equilibria of this kind exist.
\end{proof}

\section{The case of one negligible mass}

We further consider the case when $m_1=:M>0, m_2=:m>0$ and $m_3=0$. For the classical Newtonian equations, the
Lagrangian relative equilibria have a particular form under such circumstances: $m_1$ and $m_2$ lie on a straight line that rotates around the centre of mass of these particles, each moving along a circle, while $m_3$ moves on another circle, concentric with the other two, such that all three particles form an equilateral triangle at all times. We will further find out what happens for $\kappa\ne 0$.

Since $m_3=0$, this particle does not influence the motion of $m_1$ and $m_2$. System \eqref{3-body-problem} thus takes the form
\begin{equation}\label{1-negligible}
\begin{cases}
\ddot x_1=\frac{m\Big[x_2-\Big(1-\frac{\kappa \rho_{12}^2}{2}\Big)x_1\Big]}{\rho_{12}^3\Big(1-\frac{\kappa \rho_{12}^2}{4}\Big)^{3/2}}-\kappa (\dot{x}_1^2+\dot{y}_1^2+\kappa B_1)x_1\cr
\ddot y_1=\frac{m\Big[y_2-\Big(1-\frac{\kappa \rho_{12}^2}{2}\Big)y_1\Big]}{\rho_{12}^3\Big(1-\frac{\kappa \rho_{12}^2}{4}\Big)^{3/2}}-\kappa (\dot{x}_1^2+\dot{y}_1^2+\kappa B_1)y_1\cr
\ddot x_2=\frac{M\Big[x_1-\Big(1-\frac{\kappa \rho_{12}^2}{2}\Big)x_2\Big]}{\rho_{12}^3\Big(1-\frac{\kappa \rho_{12}^2}{4}\Big)^{3/2}}-\kappa (\dot{x}_2^2+\dot{y}_2^2+\kappa B_2)x_2\cr
\ddot y_2=\frac{M\Big[y_1-\Big(1-\frac{\kappa \rho_{12}^2}{2}\Big)y_2\Big]}{\rho_{12}^3\Big(1-\frac{\kappa \rho_{12}^2}{4}\Big)^{3/2}}-\kappa (\dot{x}_2^2+\dot{y}_2^2+\kappa B_2)y_2\cr
\ddot x_3=\frac{M\Big[x_1-\Big(1-\frac{\kappa \rho_{13}^2}{2}\Big)x_3\Big]}{\rho_{13}^3\Big(1-\frac{\kappa \rho_{13}^2}{4}\Big)^{3/2}}+
\frac{m\Big[x_2-\Big(1-\frac{\kappa \rho_{23}^2}{2}\Big)x_3\Big]}{\rho_{23}^3\Big(1-\frac{\kappa \rho_{23}^2}{4}\Big)^{3/2}}
-\kappa (\dot{x}_3^2+\dot{y}_3^2+\kappa B_3)x_3\cr
\ddot y_3=\frac{M\Big[y_1-\Big(1-\frac{\kappa \rho_{13}^2}{2}\Big)y_3\Big]}{\rho_{13}^3\Big(1-\frac{\kappa \rho_{13}^2}{4}\Big)^{3/2}}+
\frac{m\Big[y_2-\Big(1-\frac{\kappa \rho_{23}^2}{2}\Big)y_3\Big]}{\rho_{23}^3\Big(1-\frac{\kappa \rho_{23}^2}{4}\Big)^{3/2}}
-\kappa (\dot{x}_3^2+\dot{y}_3^2+\kappa B_3)y_3,\cr
\end{cases}
\end{equation}
which is decoupled, since the first four equations are independent of the last two equations. Here we have denoted by
$$
\rho_{12}^2=(x_1-x_2)^2+(y_1-y_2)^2+\frac{\kappa(x_1^2+y_1^2-x_2^2-y_2^2)^2}{\big[\sqrt{1-\kappa(x_1^2+y_1^2)}+\sqrt{1-\kappa(x_2^2+y_2^2)}\big]^2},
$$
$$
\rho_{13}^2=(x_1-x_3)^2+(y_1-y_3)^2+\frac{\kappa(x_1^2+y_1^2-x_3^2-y_3^2)^2}{\big[\sqrt{1-\kappa(x_1^2+y_1^2)}+\sqrt{1-\kappa(x_3^2+y_3^2)}\big]^2},
$$
$$
\rho_{23}^2=(x_2-x_3)^2+(y_2-y_3)^2+\frac{\kappa(x_2^2+y_2^2-x_3^2-y_3^2)^2}{\big[\sqrt{1-\kappa(x_2^2+y_2^2)}+\sqrt{1-\kappa(x_3^2+y_3^2)}\big]^2},
$$
$$
B_1=\frac{(x_1\dot{x}_1+y_1\dot{y}_1)^2}{1-\kappa(x_1^2+y_1^2)},\ \ \ B_2=\frac{(x_2\dot{x}_2+y_2\dot{y}_2)^2}{1-\kappa(x_2^2+y_2^2)},\ \ \  B_3=\frac{(x_3\dot{x}_3+y_3\dot{y}_3)^2}{1-\kappa(x_3^2+y_3^2)}.
$$

In the flat case, $m_1$ and $m_2$ lie on an
axis that rotates around the centre of mass of these particles. Thus $m_1$ and $m_2$ move, in general, on concentric circles; if $m_1=m_2$, they move on the same circle. The particle $m_3$ forms all the time an equilateral triangle with $m_1$ and $m_2$ and moves on a circle that is concentric with the other (one or two) circles. 

We will therefore place $m_1$ and $m_2$ on a geodesic passing through and rotating around the contact point of $\mathbb S_\kappa^2, \mathbb R^2$, and $\mathbb H_\kappa^2$ (see Figure \ref{common}). Then, if a Lagrangian orbit exists, $m_3$ must move on a circle of $\mathbb S_\kappa^2$ or $\mathbb H_\kappa^2$. So if we take a fixed angle $\theta\in(0,\pi)$, we are seeking a solution of the form
\begin{align}
x_1&=r_1\cos\alpha t, \ \ \ \ \ \ \ \ \ \   y_1=r_1\sin\alpha t\label{x1y1-1negl} \\
x_2&=-r_2\cos\alpha t,\ \ \ \ \ \ \ \ \! y_2=-r_2\sin\alpha t
\label{x2y2-1negl} \\
x_3&=r_3\cos(\alpha t+\theta), \ \ \ y_3=r_3\sin(\alpha t+\theta), \label{x3y3-1negl}
\end{align}
with $r_1, r_2, r_3>0$ and, in the case of $\mathbb S_\kappa^2$, no larger than the radius of the sphere. Then it follows that for this candidate solution we have
\begin{equation}
B_1=B_2=B_3=0,
\end{equation}
\begin{equation}\label{rho12}
\rho_{12}^2=(r_1+r_2)^2+\frac{\kappa(r_1^2-r_2^2)^2}{(\sqrt{1-\kappa r_1^2}+\sqrt{1-\kappa r_2^2})^2},
\end{equation}
\begin{equation}\label{rho13}
\rho_{13}^2=r_1^2+r_2^2+2r_1r_2\cos\theta+\frac{\kappa(r_1^2-r_3^2)^2}{(\sqrt{1-\kappa r_1^2}+\sqrt{1-\kappa r_3^2})^2},
\end{equation}
\begin{equation}\label{rho23}
\rho_{23}^2=r_2^2+r_3^2+2r_2r_3\cos\theta+\frac{\kappa(r_2^2-r_3^2)^2}{(\sqrt{1-\kappa r_2^2}+\sqrt{1-\kappa r_3^2})^2}.
\end{equation}
Since the triangle is equilateral, we must have
\begin{equation}\label{the-rhos}
\rho_{12}=\rho_{13}=\rho_{23}=:\rho.
\end{equation}

Substituting \eqref{x1y1-1negl} and \eqref{x2y2-1negl} in  the first and third as well as in the second and fourth equations of system \eqref{1-negligible}, we can respectively conclude that
\begin{equation}\label{alpha-12-1negl}
\alpha^2r_1(1-\kappa r_1^2)=\frac{m\big(r_1+r_2-\frac{\kappa\rho^2 r_1}{2}\big)}{\rho^3\big(1-\frac{\kappa\rho^2}{4}\big)^{3/2}},
\end{equation}
\begin{equation}\label{alpha-21-1negl}
\alpha^2r_2(1-\kappa r_2^2)=\frac{M\big(r_1+r_2-\frac{\kappa\rho^2 r_2}{2}\big)}{\rho^3\big(1-\frac{\kappa\rho^2}{4}\big)^{3/2}},
\end{equation}
relationships which imply that the Lagrangian solution of the decoupled system given by the first four equations in \eqref{1-negligible} exists only if
\begin{equation}\label{eq-condition}
\frac{m\big(r_1+r_2-\frac{\kappa\rho^2 r_1}{2}\big)}{r_1(1-\kappa r_1^2)}=\frac{M\big(r_1+r_2-\frac{\kappa\rho^2 r_2}{2}\big)}{r_2(1-\kappa r_2^2)}.
\end{equation}
This identity is obviously satisfied when $r_1=r_2$ and $M=m$. Let us first deal with this case.

\subsection{Two equal masses and one negligible mass in $\mathbb S_\kappa^2$}
\label{sphere-2eq-1neg}

In this subsection we will prove the following result.

\begin{theorem}\label{strange}
Assume that two bodies of equal mass move on a non-equatorial parallel circle of the sphere $\mathbb S_\kappa^2$,
being always diametrically opposed and form a Lagrangian  relative equilibrium with a third body, which has negligible mass. Then the circle on which the two bodies move must
have its radius equal to $(2\kappa)^{-1/2}$ and the third body
must move on the equator. In other words, given $\kappa$, the size of the equilateral triangle does not depend on the value of the equal masses, but the angular velocity of the equilateral triangle does. 
\end{theorem}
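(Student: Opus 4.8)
The plan is to exploit the strong symmetry of the configuration to collapse the six equations of \eqref{1-negligible} down to a small algebraic system in the radii $r:=r_1=r_2$ and $r_3$, the phase $\theta$, and the angular velocity $\alpha$, and then to show that this system admits essentially only the asserted solution. Throughout I keep $M=m$ and use that the two heavy bodies sit on the same parallel circle, diametrically opposed, as in \eqref{x1y1-1negl}--\eqref{x3y3-1negl}.

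First I would record the geometry. With $r_1=r_2=r$ the cross term in \eqref{rho12} vanishes, so $\rho_{12}=2r$, while the $\kappa$-correction appearing in \eqref{rho13}--\eqref{rho23} depends only on the pair $(r,r_3)$ and is therefore identical in $\rho_{13}$ and $\rho_{23}$. Hence $\rho_{13}^2-\rho_{23}^2=-4rr_3\cos\theta$, and the equilateral requirement \eqref{the-rhos} forces $\cos\theta=0$, i.e.\ $\theta=\pi/2$. Imposing in addition $\rho_{13}=\rho_{12}=2r$ then leaves the single geometric relation
\[
r_3^2-3r^2+\frac{\kappa(r^2-r_3^2)^2}{\big(\sqrt{1-\kappa r^2}+\sqrt{1-\kappa r_3^2}\,\big)^2}=0.
\]

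Next I would extract two dynamical relations. Because the heavy bodies are diametrically opposed, $x_1+x_2=y_1+y_2=0$; feeding this together with $M=m$, $B_3=0$, $\ddot x_3=-\alpha^2x_3$ and $\dot x_3^2+\dot y_3^2=\alpha^2r_3^2$ into the $m_3$-equations of \eqref{1-negligible} (where now $\rho_{13}=\rho_{23}=:\rho$) makes the two attractions merge into one central term and yields
\[
\alpha^2(1-\kappa r_3^2)=\frac{m\,(2-\kappa\rho^2)}{\rho^3\big(1-\tfrac{\kappa\rho^2}{4}\big)^{3/2}}.
\]
On the other hand \eqref{alpha-12-1negl} with $r_1=r_2=r$ collapses to
\[
\alpha^2(1-\kappa r^2)=\frac{m\,\big(2-\tfrac{\kappa\rho^2}{2}\big)}{\rho^3\big(1-\tfrac{\kappa\rho^2}{4}\big)^{3/2}},
\]
while \eqref{alpha-21-1negl} gives nothing new here. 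Dividing the two displayed relations removes $\alpha^2$ and the entire right-hand factor; since $\rho^2=4r^2$ the ratio of numerators is $\tfrac{1-2\kappa r^2}{1-\kappa r^2}$, and I am left with the clean relation $r_3^2=2r^2$.

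Finally I would substitute $r_3^2=2r^2$ into the geometric relation. Writing $w:=\kappa r^2$, it reduces to $\big(\sqrt{1-w}+\sqrt{1-2w}\,\big)^2=w$; isolating the radical gives $\sqrt{(1-w)(1-2w)}=2w-1$, and squaring produces $w(2w-1)=0$. The root $w=0$ is incompatible with the sign requirement $2w-1\ge 0$ forced before squaring (and would anyway collapse the heavy bodies to a pole), so $w=\tfrac12$, that is $r=(2\kappa)^{-1/2}$ and $r_3^2=\kappa^{-1}$, which is precisely the equator. Substituting $w=\tfrac12$ back into the $m_1$-relation gives $\alpha^2=2m\kappa^{3/2}$, so the size of the triangle is independent of the masses while $\alpha$ is not, exactly as claimed. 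I expect the main difficulty to be bookkeeping rather than any single computation: one must carry the $\kappa$-dependent square-root denominators of \eqref{rho12}--\eqref{rho23} intact through every reduction, verify that the $x$- and $y$-components of each body's equation genuinely collapse to the same scalar identity (so that no constraint is lost or spuriously created), and treat the squaring step with care so that the extraneous root $w=0$ is correctly discarded.
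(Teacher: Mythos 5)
Your proposal is correct and follows essentially the same route as the paper: the same reductions $\rho_{12}=2r$, the two relations for $\alpha^2(1-\kappa r^2)$ and $\alpha^2(1-\kappa r_3^2)$, their quotient giving $r_3^2=2r^2$, and substitution into the equilateral constraint to get $\kappa r^2=\tfrac12$ and $\alpha^2=2m\kappa^{3/2}$. The only (welcome) refinements are that you derive $\theta=\pi/2$ algebraically from $\rho_{13}=\rho_{23}$ rather than by the paper's geometric orthogonality remark, and you explicitly discard the extraneous root $w=0$ introduced by squaring.
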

\begin{proof}
So we assume that the motion takes place in $\mathbb S_\kappa^2$, that $0<r_1=r_2=:r<\kappa^{-1/2}$, $M=m$, and $m_3$ is negligible. Notice that $\kappa^{-1/2}=:R_\kappa$ is the radius of $\mathbb S_\kappa^2$. Also remark that in this case the geodesic passing through $m_3$ and the North Pole of the sphere is orthogonal to the geodesic connecting $m_1$ and $m_2$. If we project these geodesics on the $xy$-plane, the projections are also orthogonal. This implies that the angle $\theta$ taken in \eqref{x3y3-1negl} is $\pi/2$, so the solution we are now checking is of the form
\begin{align}
x_1&=r\cos\alpha t, \ \ \ \ \ \ \ \!   y_1=r\sin\alpha t\label{em-x1y1-1negl} \\
x_2&=-r\cos\alpha t,\ \ \ \   y_2=-r\sin\alpha t
\label{em-x2y2-1negl} \\
x_3&=-r_3\sin\alpha t, \ \ \ y_3=r_3\cos\alpha t. \label{em-x3y3-1negl}
\end{align}

From \eqref{rho12} and \eqref{the-rhos}, we can conclude that for a solution of system \eqref{1-negligible} of the form \eqref{em-x1y1-1negl}--\eqref{em-x3y3-1negl} we have
$$
\rho=2r.
$$
This fact together with relations \eqref{rho13} and \eqref{rho23} for $\theta=\pi/2$ and with equation \eqref{the-rhos} imply on one hand that
\begin{equation}\label{constraint-from-rhos}
3r^2=r_3^2+\frac{\kappa(r^2-r_3^2)^2}{(\sqrt{1-\kappa r^2}+\sqrt{1-\kappa r_3^2})^2}.
\end{equation}
On the other hand, we have from \eqref{alpha-12-1negl} and
\eqref{alpha-21-1negl} that
\begin{equation}\label{one-alpha}
\alpha^2(1-\kappa r^2)=\frac{m\big(2-\frac{\kappa\rho^2}{2}\big)}{\rho^3\big(1-\frac{\kappa\rho^2}{4}\big)^{3/2}},
\end{equation}
and from the last two equations in \eqref{1-negligible} that
\begin{equation}\label{two-alpha}
\alpha^2(1-\kappa r_3^2)=\frac{2m\big(1-\frac{\kappa\rho^2}{2}\big)}{\rho^3\big(1-\frac{\kappa\rho^2}{4}\big)^{3/2}}.
\end{equation}
By comparing \eqref{one-alpha} and \eqref{two-alpha} we are led to the relationship
\begin{equation}\label{r3-r}
r_3=\sqrt{2}r.
\end{equation}
Substituting \eqref{r3-r} into \eqref{constraint-from-rhos}, we obtain 
\begin{equation}
r=(2\kappa)^{-1/2}\ \ {\rm or}\ \ r=R_\kappa/\sqrt{2},
\end{equation}
which also implies that $r_3=R_\kappa$. Then we also obtain that
\begin{equation}
\alpha^2=2m\kappa^{3/2}\ \ {\rm or}\ \ \alpha^2=2m/R_\kappa^3.
\end{equation}

From the geometric-dynamical point of view, the above results describe a surprising situation. Since $r_3=R_\kappa$, it means that the projection of the height from $m_3$ of the equilateral triangle $m_1m_2m_3$ onto the $xy$-plane has the same Euclidean length as the radius of the sphere. Therefore the particle $m_3$ must rotate on the equator that is parallel to the $xy$-plane, while the particles $m_1$ and $m_2$ rotate on a non-geodesic circle that is also parallel with the $xy$-plane. This circle has radius $r=R_\kappa/\sqrt{2}$, so it is uniquely determined only by the given sphere (therefore cannot be chosen arbitrarily). So, for a given sphere, the equilateral triangle has always the same size, independently of the values of the masses. But the angular velocity, $\alpha$, depends on the value, $m$, of the two equal masses and on the curvature, $\kappa>0$, or radius $R_\kappa$, of the sphere $\mathbb S_\kappa^2$. 
This remark completes the proof.
\end{proof}

\subsection{Two equal masses and one negligible mass in $\mathbb H_\kappa^2$}
\label{hyperbolic-sphere-2eq-1neg}

In this subsection we will prove the following result, which shows again that the dynamics on $\mathbb H_\kappa^2$
is not as rich as in $\mathbb S_\kappa^2$.
\begin{theorem}
There are no Lagrangian relative equilibria in $\mathbb H_\kappa^2$ with two bodies of equal mass and a third body of negligible mass.
\end{theorem}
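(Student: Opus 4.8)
The plan is to run the proof of Theorem~\ref{strange} with $\kappa<0$ and track the single place where the sign of the curvature becomes decisive. Starting from the relative-equilibrium ansatz \eqref{x1y1-1negl}--\eqref{x3y3-1negl} for the decoupled system \eqref{1-negligible} and imposing $M=m$, I would first reduce to $r_1=r_2$. Clearing denominators in the existence condition \eqref{eq-condition} and factoring yields
$$
(r_1+r_2)(r_2-r_1)\Big[\,1-\kappa(r_1^2+r_1r_2+r_2^2)+\tfrac{\kappa^2\rho^2 r_1r_2}{2}\,\Big]=0.
$$
For $\kappa<0$ every term inside the bracket is positive, so the bracket cannot vanish; since $r_1,r_2>0$, the only admissible root is $r_1=r_2=:r$. (On the sphere the bracket can vanish, which is already a hint of the extra freedom available there.)

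With $r_1=r_2$, the equilateral requirement $\rho_{13}=\rho_{23}$ from \eqref{the-rhos}, read off through \eqref{rho13}--\eqref{rho23}, forces $\cos\theta=0$, hence $\theta=\pi/2$, exactly as in Theorem~\ref{strange}. From here the computation is formally identical to the spherical one: \eqref{rho12} gives $\rho=2r$; substituting the ansatz into the first/third and the second/fourth equations of \eqref{1-negligible} reproduces \eqref{one-alpha} and \eqref{two-alpha}, whose quotient (legitimate since $1-\kappa r^2>1$ for $\kappa<0$) yields $r_3^2=2r^2$; and feeding $r_3^2=2r^2$ together with $\rho^2=4r^2$ into the geometric constraint \eqref{constraint-from-rhos} collapses everything to
$$
\big(\sqrt{1-\kappa r^2}+\sqrt{1-2\kappa r^2}\,\big)^2=\kappa r^2.
$$

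Everything up to this point is sign-agnostic, so the crux is the reading of this terminal equation, and this is exactly where the argument parts company with the spherical case. For $\kappa>0$ it is solved by $r=(2\kappa)^{-1/2}$, the orbit of Theorem~\ref{strange}; but for $\kappa<0$ both radicands exceed $1$, so the left-hand side is positive (in fact greater than $4$), while the right-hand side $\kappa r^2$ is negative. The equation therefore has no solution, proving that the ansatz cannot satisfy \eqref{1-negligible} for any $\kappa<0$, i.e.\ that $\mathbb{H}_\kappa^2$ admits no Lagrangian relative equilibrium with two equal masses and one negligible mass. I expect the only step needing genuine care to be the factorization above: one must verify that the bracket is truly sign-definite for $\kappa<0$, so that the reduction to $r_1=r_2$ is exhaustive. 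The terminal sign contradiction is then immediate, and all the intervening algebra is borrowed verbatim from Theorem~\ref{strange}.
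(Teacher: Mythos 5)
Your proof is correct and follows essentially the same route as the paper's: reduce to $r_1=r_2$ and $\theta=\pi/2$, get $\rho=2r$ and $r_3=\sqrt{2}\,r$ from the equations of motion, and then observe that the resulting constraint $(\sqrt{1-\kappa r^2}+\sqrt{1-2\kappa r^2}\,)^2=\kappa r^2$ has a left side greater than $4$ and a negative right side when $\kappa<0$. The only difference is that you justify $r_1=r_2$ by factoring \eqref{eq-condition} (the paper simply assumes it within this theorem, treating the case $r_1\ne r_2$ only later, in the general Theorem 8), a small but welcome addition that relies on $\rho^2>0$, which does hold on $\mathbb H_\kappa^2$.
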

\begin{proof}
So assume that the motion takes place in $\mathbb H_\kappa^2$, that $r_1=r_2=:r>0$, $M=m$, and $m_3$ is negligible. In this case the imaginary radius of the hyperbolic sphere is $(-\kappa)^{-1/2}=:R_\kappa$.
We can proceed as in the case of the sphere discussed in Subsection \ref{sphere-2eq-1neg}, and the solution we are checking has the same form, 
\eqref{em-x1y1-1negl}--\eqref{em-x3y3-1negl}. The computations are then identical up to formula \eqref{r3-r}. But after substituting \eqref{r3-r} into \eqref{constraint-from-rhos}, we are led to the conclusion that
$$
\frac{\kappa r^2}{(\sqrt{1-\kappa r^2}+\sqrt{1-2\kappa r^2})^2}=1,
$$
which is impossible since $\kappa<0$. Therefore we must conclude that there are no Lagrangian relative equilibria of this type in $\mathbb H_\kappa^2$.
\end{proof}

\subsection{The general case}

We now return to the general case and assume, without loss of generality, that $M\ge m$. Under this hypothesis we can prove the following result.

\begin{theorem}
If one of the three masses is negligible,

(i)  there are no Lagrangian relative equilibria in $\mathbb H_\kappa^2$;

(ii) there are no Lagrangian relative equilibria in $\mathbb S_\kappa^2$ if the curvature $\kappa$ is sufficiently small, unless the two non-negligible masses are equal, in which case the solutions occur as stated in Theorem \ref{strange}.
\end{theorem}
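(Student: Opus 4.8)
The plan is to exploit the compatibility condition \eqref{eq-condition} that any Lagrangian relative equilibrium with $m_3=0$ must satisfy, together with the two scalar equations \eqref{alpha-12-1negl}--\eqref{alpha-21-1negl} fixing $\alpha^2$, and extract a contradiction except in the excluded cases. First I would note that \eqref{the-rhos} forces the common value $\rho$ to satisfy three equations \eqref{rho12}--\eqref{rho23}; comparing \eqref{rho12} with \eqref{rho13} (equating $\rho_{12}^2=\rho_{13}^2$) and \eqref{rho12} with \eqref{rho23} produces two geometric constraints linking $r_1,r_2,r_3$ and $\theta$ to $\kappa$. Combined with the dynamical constraint \eqref{eq-condition}, which relates $r_1,r_2$ to the mass ratio $M/m$, this should over-determine the configuration once we rule out $r_1=r_2$.

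The key algebraic step is to divide the two equations \eqref{alpha-12-1negl} and \eqref{alpha-21-1negl} to eliminate $\alpha^2$, arriving at \eqref{eq-condition}, and then to rewrite it as a relation of the form
\begin{equation}
\frac{m}{M}=\frac{r_1(1-\kappa r_1^2)\big(r_1+r_2-\tfrac{\kappa\rho^2 r_2}{2}\big)}{r_2(1-\kappa r_2^2)\big(r_1+r_2-\tfrac{\kappa\rho^2 r_1}{2}\big)}.
\end{equation}
Under the normalization $M\ge m$ this ratio lies in $(0,1]$. For part (i), hyperbolic space, I would substitute $\kappa<0$ and show that the geometric constraints coming from $\rho_{12}=\rho_{13}=\rho_{23}$ force $r_1=r_2$ (since the sign of the square-root denominators behaves as in Theorem 7's final step, where the offending relation $\kappa r^2/(\cdots)^2=1$ is impossible for $\kappa<0$), and then \eqref{eq-condition} with $r_1=r_2$ gives $M=m$, contradicting the hypothesis that $m_3$ alone is negligible while $M>m$; the equal-mass subcase was already excluded in $\mathbb H_\kappa^2$ by the previous theorem.

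For part (ii), the elliptic case with small $\kappa$, I would treat $\kappa$ as a perturbation parameter and expand the constraint system around $\kappa=0$. At $\kappa=0$ the classical Lagrangian solution is the unique (up to reflection) equilateral configuration, with $r_1,r_2$ determined by the centre-of-mass condition $m_1r_1=m_2r_2$; this is a nondegenerate solution of the reduced system, so by the implicit function theorem it persists and stays unique for $|\kappa|$ small, forcing $r_1=r_2\Leftrightarrow M=m$ for the isosceles-type branch while any genuinely scalene branch cannot bifurcate into existence. The main obstacle I expect is controlling the $\kappa$-dependence of the three $\rho_{ij}^2$ simultaneously: the square-root denominators in \eqref{rho12}--\eqref{rho23} couple $r_3$ to both $r_1$ and $r_2$ in an asymmetric way, so verifying that the Jacobian of the combined geometric-plus-dynamical system is nonsingular at $\kappa=0$ (which is what makes the implicit function theorem applicable and yields the uniqueness) requires a careful computation rather than a soft argument, and it is precisely the place where the quantitative smallness of $\kappa$ enters.
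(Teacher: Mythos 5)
There is a genuine gap, and it is the central one: you never use the equations of motion of the massless body $m_3$, i.e.\ the last two equations of system \eqref{1-negligible}. The paper substitutes the ansatz \eqref{x1y1-1negl}--\eqref{x3y3-1negl} into those equations and, by separating the terms in $\cos\alpha t,\sin\alpha t$ from those in $\cos(\alpha t+\theta),\sin(\alpha t+\theta)$, obtains the necessary condition $Mr_1=mr_2$ (equations \eqref{sin-alphat}--\eqref{Mr1=mr2}). For $\kappa=0$ this condition coincides with \eqref{eq-condition} (both reduce to $m/r_1=M/r_2$), which is exactly why classical restricted Lagrangian orbits exist for every mass ratio; for $\kappa\ne 0$ it becomes an \emph{independent} constraint, and feeding $M/r_2=m/r_1$ back into \eqref{eq-condition} and cancelling a factor of $\kappa$ yields the factorization $(r_1-r_2)\big[\tfrac{\rho^2}{2}(1+\kappa r_1r_2)-(r_1+r_2)^2\big]=0$ of \eqref{factorization}. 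The entire theorem then reduces to showing that the second factor is strictly negative (for all $\kappa<0$, and for $\kappa>0$ sufficiently small, using $\kappa\mu r^2\le 1$), forcing $r_1=r_2$ and hence $M=m$. Without $Mr_1=mr_2$ your system is not over-determined and no contradiction can be extracted.

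Both of your replacement arguments fail where this condition should enter. In (i) you assert that the equilateral constraints $\rho_{12}=\rho_{13}=\rho_{23}$ force $r_1=r_2$ when $\kappa<0$; they do not: for given $r_1\ne r_2$ these are two equations in the two free unknowns $(r_3,\theta)$ and are generically solvable, and the impossible relation at the end of the proof of Theorem 7 is reached only \emph{after} already assuming $r_1=r_2$ and deriving $r_3=\sqrt2\,r$ from the dynamics, so the analogy does not transfer. In (ii) the implicit-function-theorem argument points in the wrong direction: at $\kappa=0$ the restricted Lagrangian solution exists for \emph{every} mass ratio, so a nondegenerate-persistence argument would, if anything, prove existence for small $\kappa$ --- the opposite of the claim. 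The non-existence is precisely a failure of persistence caused by the extra compatibility condition above, whose sign obstruction is invisible at $\kappa=0$ because the offending factor enters multiplied by $\kappa$. To salvage your outline you would need either to exhibit the over-determined system explicitly (as the paper does) or to set up a Lyapunov--Schmidt reduction in which the degeneracy at $\kappa=0$ is analyzed rather than assumed away.
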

\begin{proof}
If we substitute a candidate solution of the form \eqref{x1y1-1negl}--\eqref{x3y3-1negl} into the last two equations of system
\eqref{1-negligible}, we obtain equations that involve
$\cos\alpha t, \sin\alpha t$, on one hand, and $\cos(\alpha t+\theta), \sin(\alpha t+\theta)$ on the other hand. Separating the arguments $\alpha t$ and $\theta\in(0,\pi)$, and arguing that the equations must be satisfied for all $t\in\mathbb R$, we are led to the
relationships
\begin{equation}\label{sin-alphat}
\alpha^2r_3(1-\kappa r_3^2)=\frac{(M+m)\big(1-\frac{\kappa\rho^2}{2}\big)r_3}{\rho^3\big(1-\frac{\kappa\rho^2}{4}\big)^{3/2}},
\end{equation}
\begin{equation}\label{cos-alphat}
\alpha^2r_3(1-\kappa r_3^2)\cos\theta=-\frac{Mr_1-mr_2}{\rho^3\big(1-\frac{\kappa\rho^2}{4}\big)^{3/2}}+\frac{(M+m)\big(1-\frac{\kappa\rho^2}{2}\big)r_3\cos\theta}{\rho^3\big(1-\frac{\kappa\rho^2}{4}\big)^{3/2}},
\end{equation}
which are simultaneously satisfied only if 
\begin{equation}\label{Mr1=mr2}
Mr_1=mr_2.
\end{equation}
The case of $M=m$ and $r_1=r_2$, which corresponds to $\theta=\pi/2$ and was already treated in subsections \ref{sphere-2eq-1neg} and \ref{hyperbolic-sphere-2eq-1neg},
is in agreement with relation \eqref{Mr1=mr2} and its derivation from \eqref{sin-alphat} and \eqref{cos-alphat}.

Writing \eqref{Mr1=mr2} as $m/r_1=M/r_2$, we can conclude from \eqref{eq-condition} that
\begin{equation}\label{factorization}
(r_1-r_2)\Big[\frac{\rho^2}{2}(1+\kappa r_1r_2)-(r_1+r_2)^2\Big]=0.
\end{equation}
Therefore we must split our analysis into two cases.

\bigskip

{\bf Case 1}: $r_1=r_2$. With this hypothesis it follows from \eqref{Mr1=mr2} that $M=m$, a situation we already settled in subsections \ref{sphere-2eq-1neg} and \ref{hyperbolic-sphere-2eq-1neg}. Moreover, for $\kappa>0$, the relationship between $r$ and $r_3$ that we obtain from \eqref{eq-condition} and \eqref{sin-alphat} is identical with the one we computed in subsection \ref{sphere-2eq-1neg}, so this case is completely solved.

\bigskip

{\bf Case 2}: $\frac{\rho^2}{2}(1+\kappa r_1r_2)-(r_1+r_2)^2=0$. Under these circumstances we must start with the hypothesis $r_1\ne r_2$. Since we assumed $M\ge m$, relation \eqref{Mr1=mr2} implies that we cannot have $r_1>r_2$, so the initial hypothesis leads to the conclusion that $r_1<r_2$ and, consequently, $M>m$. If we denote $r:=r_1$ and $\mu:=M/m\ge 1$, then $r_2=\mu r$ and
\begin{equation}\label{r1r2as-r}
\frac{\rho^2}{2}(1+\kappa r_1r_2)-(r_1+r_2)^2=
\frac{\rho^2}{2}(1+\kappa\mu r^2)-(1+\mu)^2r^2.
\end{equation}
But from \eqref{rho12}, we obtain that
\begin{equation}\label{rho^2}
\rho^2=(1+\mu)^2r^2+\frac{\kappa(1-\mu)^2r^4}{(\sqrt{1-\kappa r^2}+\sqrt{1-\kappa\mu^2 r^2})^2}.
\end{equation}

If $\kappa<0$, we can see from \eqref{rho^2} that
$$
\rho^2\le (1+\mu)^2r^2.
$$
Consequently, using the above relation and \eqref{r1r2as-r}, we have the inequality 
$$
\frac{\rho^2}{2}(1+\kappa r_1r_2)-(r_1+r_2)^2\le
\frac{1}{2}(1+\mu)^2r^2(\kappa\mu r^2-1).
$$
But for $\kappa<0$, the right hand side of this inequality is negative, so the second factor in \eqref{factorization} cannot be zero. We can thus conclude from here and from the result obtained in subsection \ref{hyperbolic-sphere-2eq-1neg} that there are no Lagrangian relative equilibria on 2-dimensional hyperbolic spheres in the case of one negligible mass.

If $\kappa>0$ and sufficiently small, then 
$$
(\sqrt{1-\kappa r^2}+\sqrt{1-\kappa\mu^2 r^2})^2< 1.
$$
From \eqref{rho^2} we can thus draw the conclusion that
$$
\rho^2< (1+\mu)^2r^2+\kappa(1-\mu)^2r^4.
$$
Using the above relationship and \eqref{r1r2as-r} we obtain the inequality
\begin{equation}\label{essential-ineq}
\frac{\rho^2}{2}(1+\kappa r_1r_2)-(r_1+r_2)^2< 
\frac{1}{2}[(1+\mu)^2r^2(\kappa\mu r^2-1)+\kappa(1-\mu)^2r^4(1+\kappa\mu r^2)].
\end{equation}

Let us further see
\begin{equation}\label{ineq-mu^2}
\kappa\mu r^2-1\le 0
\end{equation}
even for large values of $\kappa>0$. For this, notice that from the second square root in the denominator of \eqref{rho^2}, we must have
$$
1-\kappa\mu^2r^2\ge 0,
$$
which is the same as 
$$
r\le\frac{m}{M}R_\kappa,
$$
where, recall, $\kappa=1/R_\kappa^2$. But $0<m/M<1$,
so the above inequality implies that
$$
r\le\sqrt{\frac{m}{M}}R_\kappa,
$$
a relationship that is equivalent to \eqref{ineq-mu^2} and that
is always satisfied.

From \eqref{ineq-mu^2} we now obtain that for $\kappa>0$ and sufficiently small, the right hand side of \eqref{essential-ineq} is negative, thus 
$$
\frac{\rho^2}{2}(1+\kappa r_1r_2)-(r_1+r_2)^2<0,
$$
and consequently there are no Lagrangian relative equilibria in Case 2.

So for one negligible mass, we can conclude that Lagrangian relative equilibria do not exist for negative curvature, and that they occur for small positive curvature only if $M=m$. Since the case $|\kappa|<\! \! <1$ is the only one of relevance when studying the curvature of the large-scale universe, we will not further investigate here what happens for other positive values of $\kappa$.
\end{proof}

\smallskip

\noindent{\bf Acknowledgments}. The author is indebted to Regina Mart\'inez, Carles Sim\'o, and Ernesto P\'erez-Chavela for several discussions on this topic.


\end{document}